\numberwithin{equation}{section}
\newtheorem{theorem}{Theorem}[section]
\newtheorem{corollary}{Corollary}[section]
\newtheorem{lemma}[theorem]{Lemma}
\newtheorem{proposition}{Proposition}[section]
\newtheorem{example}[theorem]{Example}
\newtheorem{definition}[theorem]{Definition}
\newtheorem{remark}[theorem]{Remark}
\DeclareMathOperator\MF{{\mathcal{MF}}}
\DeclareMathOperator\PMF{{\mathcal{PMF}}}
\DeclareMathOperator\M{Mod}
\DeclareMathOperator\T{Teich}
\DeclareMathOperator\K{{\mathcal H}}
\title{On a family of unitary representations of mapping class groups}
\author{Biao Ma}
\begin{document}
\date{}
\maketitle

\begin{abstract}
  For a compact surface $S = S_{g,n}$ with $3g + n \geq 4$, we introduce a family of unitary representations of the mapping class group $\M(S)$ based on the space of measured foliations. For this family of representations, we show that none of them \textcolor{black}{has almost} invariant vectors. \textcolor{black}{As one application}, we obtain an inequality concerning the action of $\M(S)$ on the Teichm\"{u}ller space of $S$. Moreover, using the same method plus recent results about weak equivalence, we also give a classification, up to weak equivalence, for the unitary quasi-representations with respect to geometrical subgroups.
\end{abstract}

\section{Introduction}
\bigskip
\noindent
Let $S = S_{g,n}$ be a compact, connected, orientable surface of genus $g$ with $n$ boundaries, the mapping class group $\M(S)$ of $S$ is defined to be the group of isotopy classes of orientation-preserving homeomorphisms of $S$ which preserving each boundary components (without the assumption that it should fix each boundary pointwise). Throughout this paper, $(g,n)$ is assumed to satisfy $3g + n \geq 4$ and a subsurface of $S$ is allowed to be disconnected.  \\

\noindent
Given a discrete group $G$, a unitary representation is a pair $(\pi, V)$ where $V$ is a Hilbert space and $\pi : G \rightarrow U(V)$ is a homomorphism from $G$ to the group of all unitary operators of $V$ \cite{BekkaHarpValette08}. Infinite dimensional unitary representations of mapping class groups $\M(S)$ received a lot of attention recently. In \cite{Paris2002}, the author considers unitary representations given by the action of $\M(S)$ on the curve complex associated to $S$. See \cite{Andersenvillemoes12}, \cite{AndersenVillemoes09},\cite{Goldman2005} for more topics in this direction.\\

\noindent
The group $\M(S)$ acts on the space of measured foliations \textcolor{black}{$\MF(S)$}, which is defined as the set of equivalence classes of measured foliations on $S$. As the action are ergodic with respect to generalized Thurston measures $\mu$ \cite{Masur1982},\cite{Masur1985},\cite{Lindenstrauss-Mirzakhani08}, \cite{Hamenstaedt2009} (see Section \ref{section:measure} for a brief description of the measures), one obtains a family of unitary representations by considering the induced action of $\M(S)$ on the space $L^2(\MF(S), \mu)$. It is quite easy to see that the family of unitary representations considered in \cite{Paris2002} is a special subfamily. However, unlike representations studied in \cite{Paris2002}, Example \ref{example:reducibility} will show that some of representations considered here are reducible. \\

\begin{definition}{\label{def}}
Let $(\pi, V)$ be a unitary representation of a discrete group $G$. The representation $\pi$ is said to \textcolor{black}{have almost} invariant vectors if for every finite set $K \subseteq G$ and every $\epsilon > 0$, there exists $v \in V$ such that $$ \max_{g \in K}\|\pi(g)v-v\| < \epsilon\|v\|.$$
\end{definition}
\medskip
\noindent
The main result of this paper is about the existence of almost invariant vectors for the representation $\pi^{\mu}$ associated to the action of $\M(S)$ on $L^2(\MF(S), \mu)$. The existence of such vectors for other representations of mapping class group has been discussed in \cite{Andersen2007b}.
\begin{theorem}[Theorem \ref{main:theorem}]
 For a compact surface $S = S_{g,n}$ with $3g + n \geq 4$ and each generalized Thurston measure $\mu$, the associated representation $\pi^{\mu}$ of $\M(S)$ does not have almost invariant vectors.
\end{theorem}

\noindent
The first direct application of this theorem is the following:
\begin{corollary}[Corollary \ref{main:coro2}]
 Let $S = S_{g,n}$ be a compact surface with $3g + n \geq 4$ and $\mu$ be a generalized Thurston measure, then $H^1(\M(S),\pi^{\mu}) = \overline{H^1}(\M(S),\pi^{\mu})$, where $\pi^{\mu}$ is the associated representation of $\M(S)$.
\end{corollary}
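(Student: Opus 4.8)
The plan is to deduce the equality $H^1(\M(S),\pi^{\mu}) = \overline{H^1}(\M(S),\pi^{\mu})$ from Theorem \ref{main:theorem} by the standard argument that absence of almost invariant vectors forces the space of $1$-coboundaries to be closed. Recall that $H^1(\M(S),\pi^{\mu}) = Z^1(\M(S),\pi^{\mu})/B^1(\M(S),\pi^{\mu})$ whereas $\overline{H^1}(\M(S),\pi^{\mu}) = Z^1(\M(S),\pi^{\mu})/\overline{B^1(\M(S),\pi^{\mu})}$, the closure being taken in the topology of pointwise convergence on $Z^1$. Since $\M(S)$ is finitely generated (for instance by finitely many Dehn twists), fixing a finite symmetric generating set $\Sigma$ of $\M(S)$ equips $Z^1(\M(S),\pi^{\mu})$ with the Hilbert norm $\|b\|_{\Sigma}^2 = \sum_{s\in\Sigma}\|b(s)\|^2$, and this norm induces exactly the topology of pointwise convergence (because for each $g\in\M(S)$ the vector $b(g)$ is a fixed bounded linear expression in the $b(s)$, $s\in\Sigma$). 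Hence it suffices to prove that $B^1(\M(S),\pi^{\mu})$ is closed in $Z^1(\M(S),\pi^{\mu})$.

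First I would split the representation space as $V = V^{\pi^{\mu}}\oplus W$, where $V^{\pi^{\mu}}$ is the subspace of $\M(S)$-invariant vectors and $W$ its orthogonal complement; both summands are $\pi^{\mu}$-invariant. Projecting a cocycle onto these summands yields $Z^1(\M(S),\pi^{\mu}) \cong \mathrm{Hom}(\M(S),V^{\pi^{\mu}}) \oplus Z^1(\M(S),\pi^{\mu}|_W)$ and $B^1(\M(S),\pi^{\mu}) = 0 \oplus B^1(\M(S),\pi^{\mu}|_W)$, so the problem reduces to showing that $B^1(\M(S),\pi^{\mu}|_W)$ is closed in $Z^1(\M(S),\pi^{\mu}|_W)$. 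The crucial observation is that $\pi^{\mu}|_W$ has no nonzero invariant vectors and, by Theorem \ref{main:theorem}, still has no almost invariant vectors: any almost invariant vectors for the subrepresentation $\pi^{\mu}|_W$ would a fortiori be almost invariant vectors for $\pi^{\mu}$. Consequently there exist a finite subset $K\subseteq\M(S)$ and $\epsilon>0$ with $\max_{g\in K}\|\pi^{\mu}(g)v - v\|\geq\epsilon\|v\|$ for every $v\in W$.

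Next I would convert this into a lower bound over the fixed generating set. Writing each $g\in K$ as a word of length at most $L$ in $\Sigma$ and telescoping, unitarity of $\pi^{\mu}$ gives $\|\pi^{\mu}(g)v - v\|\leq L\,\max_{s\in\Sigma}\|\pi^{\mu}(s)v - v\|$, whence $\sum_{s\in\Sigma}\|\pi^{\mu}(s)v - v\|^2 \geq (\epsilon/L)^2\|v\|^2$ for all $v\in W$. This says exactly that the coboundary map $W\to Z^1(\M(S),\pi^{\mu}|_W)$, $v\mapsto\bigl(g\mapsto\pi^{\mu}(g)v - v\bigr)$, is injective and bounded below for $\|\cdot\|_{\Sigma}$; since $W$ is complete, its image $B^1(\M(S),\pi^{\mu}|_W)$ is complete, hence closed. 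Combined with the reduction of the previous paragraph this gives that $B^1(\M(S),\pi^{\mu})$ is closed, and therefore $H^1(\M(S),\pi^{\mu}) = \overline{H^1}(\M(S),\pi^{\mu})$.

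I do not expect a genuine obstacle: the argument is soft once Theorem \ref{main:theorem} is available, and this implication is recorded in general form in \cite{BekkaHarpValette08}. The only points requiring care are the observation that passing to the orthogonal complement of the invariant vectors preserves the ``no almost invariant vectors'' property (used to ensure $K,\epsilon$ exist on $W$ rather than just on all of $V$, where invariant vectors would obstruct any such bound), and the verification that on $Z^1$ the topology of pointwise convergence agrees with the Hilbert-norm topology coming from a finite generating set, which is where finite generation of $\M(S)$ enters.
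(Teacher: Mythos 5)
Your argument is correct, and it is the same deduction the paper makes: the only real input is Theorem \ref{main:theorem}. The difference is that the paper treats the implication ``no almost invariant vectors $\Rightarrow H^1=\overline{H^1}$'' as a black box, citing the Guichardet-type criterion (Theorem \ref{almostinv:MV}, from Martin--Valette) together with Remarks \ref{remark1} and \ref{remark2}, which supply the hypothesis of that criterion that $\pi^{\mu}$ has no nonzero invariant vectors; you instead re-prove the relevant direction of the criterion from scratch (coboundary map bounded below on the orthogonal complement of the invariant vectors, hence $B^1$ closed in the finite-generating-set norm, which induces the topology used for $\overline{H^1}$). Your route is self-contained and slightly more economical on hypotheses: the orthogonal splitting $V=V^{\pi^{\mu}}\oplus W$ makes the remarks about invariant vectors unnecessary (in fact $V^{\pi^{\mu}}=0$ automatically here, since a nonzero invariant vector is trivially almost invariant, so Theorem \ref{main:theorem} already rules it out); what the paper's citation buys is brevity. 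All the individual steps you give (equivalence of the pointwise and $\|\cdot\|_{\Sigma}$ topologies on $Z^1$ via finite generation of $\M(S)$, the telescoping estimate over words in the generators, closedness of the image of a bounded-below operator from a complete space) are sound.
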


\noindent
For the second application, we will obtain a geometric inequality of independent interests concerning the action of $\M(S)$ on the Teichm\"{u}ller space $\T(S)$ of $S$.

\begin{corollary} [Corollary \ref{main:coro}]
Let $S = S_{g,n}$ be a compact surface with $3g + n \geq 4$ and $\gamma$ be the isotopy class of an essential simple closed curve on $S$. Then there exists a finite subset $\{\phi_1,...,\phi_n\}$ of $\M(S)$ consisting \textcolor{black}{of} pseudo-Anosov mapping classes and a constant $ \epsilon > 0 $, such that, for every point $\mathcal{X}$ in $\T(S)$, we have:
$$\max_{i\in \{1,2,...,n\}}\textcolor{black}{\left\{\sum_{\alpha \in \M(S).\gamma}e^{-2\ell_{\mathcal{X}}(\alpha)}(e^{\Delta^{\phi_i}_{\mathcal{X}}(\alpha)}-1)^2\right\}} \geq \epsilon\sum_{\alpha \in \M(S).\gamma} e^{-2\ell_{\mathcal{X}}(\alpha)},$$
where $\Delta^{\phi_i}_{\mathcal{X}}(\alpha) = \ell_{\mathcal{X}}(\alpha)-\ell_{{\phi_i.\mathcal {X}}}(\alpha)$ and $\ell_{\mathcal{X}}(\alpha)$ is the geodesic length of $\alpha$.
\end{corollary}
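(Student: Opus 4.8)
The plan is to recognize the left-hand side as a squared distance computed in the quasi-regular representation of $\M(S)$ on $\ell^2$ of the orbit $\M(S).\gamma$, and then to feed this into Theorem~\ref{main:theorem}. First I would rewrite the summand as
$$e^{-2\ell_{\mathcal{X}}(\alpha)}\bigl(e^{\Delta^{\phi_i}_{\mathcal{X}}(\alpha)}-1\bigr)^{2}=\bigl(e^{-\ell_{\phi_i.\mathcal{X}}(\alpha)}-e^{-\ell_{\mathcal{X}}(\alpha)}\bigr)^{2},$$
using the equivariance identity $\ell_{\phi.\mathcal{X}}(\alpha)=\ell_{\mathcal{X}}(\phi^{-1}.\alpha)$. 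Putting $f_{\mathcal{X}}(\alpha)=e^{-\ell_{\mathcal{X}}(\alpha)}$ for $\alpha\in\M(S).\gamma$ and $H=\mathrm{Stab}_{\M(S)}(\gamma)$, the orbit is the homogeneous space $\M(S)/H$, the space $\ell^2(\M(S).\gamma)=\ell^2(\M(S)/H)$ carries the quasi-regular representation $\lambda_{\M(S)/H}$, and $\lambda_{\M(S)/H}(\phi)f_{\mathcal{X}}=f_{\phi.\mathcal{X}}$. Hence the Corollary is equivalent to the statement that there are pseudo-Anosov mappings $\phi_1,\dots,\phi_n$ and $\epsilon>0$ with $\max_i\|\lambda_{\M(S)/H}(\phi_i)f_{\mathcal{X}}-f_{\mathcal{X}}\|^{2}\ge\epsilon\|f_{\mathcal{X}}\|^{2}$ for every $\mathcal{X}\in\T(S)$. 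That $f_{\mathcal{X}}$ is a nonzero vector of $\ell^2(\M(S).\gamma)$ follows from the polynomial bound $\#\{\alpha\in\M(S).\gamma:\ell_{\mathcal{X}}(\alpha)\le L\}=O(L^{6g-6+2n})$, for which even a crude form of the counting asymptotics suffices.

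Next I would connect $\lambda_{\M(S)/H}$ with Theorem~\ref{main:theorem}. Decomposing $\ell^2$ of the set of isotopy classes of essential simple closed curves into $\M(S)$-orbits — there being only finitely many, one per topological type — exhibits $\lambda_{\M(S)/H}$ as a direct summand of the representation studied in \cite{Paris2002}, which, as noted in the introduction, is $\pi^{\mu}$ for a suitable generalized Thurston measure $\mu$. Since any subrepresentation of a representation without almost invariant vectors again has none, Theorem~\ref{main:theorem} yields that $\lambda_{\M(S)/H}$ has no almost invariant vectors. Unwinding the definition, there are a finite set $K_0\subseteq\M(S)$ and $\epsilon_0>0$ with $\max_{g\in K_0}\|\lambda_{\M(S)/H}(g)v-v\|\ge\epsilon_0\|v\|$ for all $v\in\ell^2(\M(S)/H)$.

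Finally I would arrange that the finite set consists of pseudo-Anosov mappings. Since the pseudo-Anosov elements generate $\M(S)$, each of the finitely many $g\in K_0$ is a product of at most $m$ pseudo-Anosov maps for some fixed $m$; let $K_1$ be the finite set of all factors that occur. Unitarity of $\lambda_{\M(S)/H}$ together with the triangle inequality then gives $\max_{\psi\in K_1}\|\lambda_{\M(S)/H}(\psi)v-v\|\ge(\epsilon_0/m)\|v\|$ for all $v$. Taking $\{\phi_1,\dots,\phi_n\}=K_1$, $\epsilon=(\epsilon_0/m)^{2}$ and $v=f_{\mathcal{X}}$, then squaring and undoing the rewriting of the first step, one obtains the asserted inequality, uniformly in $\mathcal{X}$. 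The substantive content beyond Theorem~\ref{main:theorem} itself is modest: the $\ell^2$-membership of $f_{\mathcal{X}}$ (the counting bound) and the fact that pseudo-Anosov mappings generate $\M(S)$ (needed for the last step) are the only points requiring a word of justification, and I expect the last step to be where a little care is genuinely needed.
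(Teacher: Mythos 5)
Your proposal is correct, and its core reduction is exactly the one the paper intends: identify the left-hand side as $\|\pi^{\mu_\gamma}(\phi_i)f_{\mathcal X}-f_{\mathcal X}\|^2$ and the right-hand side as $\|f_{\mathcal X}\|^2$ for the vector $f_{\mathcal X}(\alpha)=e^{-\ell_{\mathcal X}(\alpha)}$, which lies in $\ell^2(\M(S).\gamma)$ by the polynomial growth of simple closed geodesics (Proposition \ref{prop: mcshine}), and then invoke the absence of almost invariant vectors. Where you diverge is in how the test set is made pseudo-Anosov: the paper gets this for free from the \emph{proof} of Theorem \ref{main:theorem} in the discrete case, since Lemma \ref{lemma:discrete} is applied after passing (via Lemma \ref{almostinv:subgroup}) to a rank $2$ free pseudo-Anosov subgroup $H$, so the witnessing finite set $K$ already lies in $H$ and its nontrivial elements are automatically pseudo-Anosov; you instead treat Theorem \ref{main:theorem} as a black box and upgrade an arbitrary finite set $K_0$ by factoring each element into pseudo-Anosov classes and telescoping, at the cost of a factor $1/m$ and of the external (true, but not proved in the paper) fact that pseudo-Anosov classes generate $\M(S)$. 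Both routes work; the paper's avoids the generation fact but requires opening up the proof of the main theorem, while yours uses only its statement. One small inaccuracy worth fixing: you do not need to realize $\ell^2(\M(S)/G_\gamma)$ as a direct summand of the full curve-complex representation, and the latter is not itself some $\pi^{\mu}$ (generalized Thurston measures are ergodic, hence supported on a single orbit); the orbit representation $\ell^2(\M(S).\gamma)$ is directly $L^2(\MF(S),\mu_\gamma)$ for the discrete measure of the case $\mathcal R=\emptyset$, so Theorem \ref{main:theorem} applies to it without any decomposition. Also make sure your finite set is taken symmetric (or note that inverses of pseudo-Anosovs are pseudo-Anosov) so that the convention in $\pi(\phi)f_{\mathcal X}=f_{\phi.\mathcal X}$ versus $f_{\phi^{-1}.\mathcal X}$ is immaterial.
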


\noindent
For \textcolor{black}{unitary representations} associated to discrete measures on the space of measured foliations, some of them are irreducible and some are reducible. We will discuss irreducible decompositions (See Proposition \ref{proposition:irreducible}). We will also use the same method as \textcolor{black}{in the proof} of the main theorem, combined with recent results in \cite{BridsonHarpe},\cite{BKKO},\cite{BekkaKalantar}, to give a classification for a family of quasi-regular unitary representations, which is a stronger version of Corollary 5.5 in \cite{Paris2002}. Recall that, given two unitary representations $(\pi, \mathcal{H})$ and $(\phi, \mathcal{K})$ of a discrete group $G$, $\pi$ is {\it weakly contained} in $\phi$ if for every $\xi$ in $\mathcal{H}$, every finite subset $Q$ of $G$ and $\epsilon > 0$, there exist $\eta_1,...,\eta_n$ in $\mathcal{K}$ such that
$$\max_{g \in Q}\textcolor{black}{\left|<\pi(g)\xi, \xi> - \sum^{n}_{i=1}<\phi(g)\eta_i,\eta_i>\right|} < \epsilon.$$
If $\pi$ is weakly contained in $\phi$ and $\phi$ is weakly contained in $\pi$, then $\phi$ and $\pi$ are said to be {\it weakly equivalent}. \textcolor{black}{By Proposition F.1.7 in \cite{BekkaHarpValette08}, Definition \ref{def} is equivalent to say that the trivial representation is weakly contained in the representation $\pi$. }We then have the following theorem.
\begin{theorem}[Theorem \ref{theorem:classification}]
Let $S = S_{g,n}$ be a compact surface with $3g + n \geq 4$. Let $\gamma = \sum_{i=1}^k\gamma_i$ and $\delta=\sum_{i=1}^l\delta_i$, where $\{\gamma_i\}$ and $\{\delta_i\}$ are collections of pairwise disjoint, distinct isotopy classes of essential simple closed curves on $S$.
\begin{enumerate}
  \item If at least one of $k$ and $l$ is not $3g-3+n$, then the associated unitary representations $\pi_{\gamma}$ and $\pi_{\delta}$ are weakly equivalent if and only if $\gamma$ and $\delta$ are of the same topological type (that is, there is a mapping class $f$ so that $\gamma = f(\delta)$).
  \item Suppose $S$ is not $S_{0,4}, S_{1,1}, S_{1,2}, S_{2,0}$. If $k = 3g-3+n$, then $\pi_{\gamma}$ is weakly equivalent to the regular representation $\lambda_S$.
  \item Suppose $S$ is not $S_{0,4}, S_{1,1}, S_{1,2}, S_{2,0}$. If $k \neq 3g-3+n$, then $\pi_{\gamma}$ is not weakly contained in $\lambda_S$.
\end{enumerate}
\end{theorem}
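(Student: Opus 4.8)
The plan is to pass throughout to the quasi-regular picture: by Proposition~\ref{proposition:irreducible} and the discussion preceding it, $\pi_\gamma$ is unitarily equivalent to $\ell^2\bigl(\M(S)/H_\gamma\bigr)\cong\ell^2\bigl(\M(S).\gamma\bigr)$, where $H_\gamma=\mathrm{Stab}_{\M(S)}(\gamma)$ is the geometric subgroup stabilising the multicurve $\gamma$, and likewise for $\delta$. Two structural facts about $H_\gamma$ will carry the whole argument. First, cutting $S$ along $\gamma$ exhibits $H_\gamma$ as an extension $1\to\mathbb{Z}^{k}\to H_\gamma\to Q_\gamma\to 1$, in which $\mathbb{Z}^{k}$ is generated by the Dehn twists $T_{\gamma_1},\dots,T_{\gamma_k}$ and $Q_\gamma$ is, up to finite index, the direct product of the mapping class groups of the components of $S\setminus\gamma$ extended by the finite permutation group of the components of $\gamma$; hence $H_\gamma$ is amenable (in fact virtually $\mathbb{Z}^{3g-3+n}$) exactly when $k=3g-3+n$, and otherwise some component of $S\setminus\gamma$ has positive complexity, so $H_\gamma$ contains a non-abelian free group and is non-amenable. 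Second, once $S$ is not one of $S_{0,4},S_{1,1},S_{1,2},S_{2,0}$ — which for $3g+n\ge 4$ are exactly the surfaces for which $\M(S)$ has a non-trivial finite normal subgroup — the group $\M(S)$ is $C^*$-simple, by \cite{BridsonHarpe} together with \cite{BKKO}.

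Parts (2) and (3) then follow quickly from the elementary equivalence: for any subgroup $H$ of a discrete group $G$, one has $\ell^2(G/H)\preceq\lambda_G$ if and only if $H$ is amenable. The ``if'' direction is $\ell^2(G/H)=\mathrm{Ind}_H^G 1_H\preceq\mathrm{Ind}_H^G\lambda_H=\lambda_G$, and for ``only if'' one restricts to $H$ and notes that the $H$-fixed vector $\delta_{eH}$ gives $1_H\preceq\ell^2(G/H)|_H\preceq\lambda_G|_H\sim\lambda_H$. In particular part (3) is immediate from the first structural fact: if $k\neq 3g-3+n$ then $H_\gamma$ is non-amenable, so $\pi_\gamma$ is not weakly contained in $\lambda_S$. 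For part (2), when $\gamma$ is a pants decomposition $H_\gamma$ is amenable, whence $\pi_\gamma\preceq\lambda_S$; for the reverse containment I would use that for a $C^*$-simple group the quasi-regular representation attached to an amenable subgroup is weakly equivalent to the regular representation — this is where \cite{BekkaKalantar} (resp.\ \cite{BKKO}) is invoked — to conclude $\lambda_S\preceq\pi_\gamma$, and hence $\pi_\gamma\sim\lambda_S$.

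For part (1) the implication ``same topological type $\Rightarrow$ weak equivalence'' is trivial: if $\gamma=f(\delta)$ then conjugation by $f$ carries $H_\delta$ onto $H_\gamma$, so the two representations are already unitarily equivalent. For the converse, assume $\pi_\gamma\sim\pi_\delta$ with, say, $k\neq 3g-3+n$. By part (3), $\pi_\gamma$ — hence $\pi_\delta$ — is not weakly contained in $\lambda_S$, so the amenability criterion forces $H_\delta$ to be non-amenable and $l\neq 3g-3+n$ as well; thus both $\gamma$ and $\delta$ are non-pants-decompositions and the excluded surfaces play no further role. Restricting the weak equivalence to the geometric subgroups and using their canonical fixed vectors yields $1_{H_\gamma}\preceq\pi_\delta|_{H_\gamma}$ and, symmetrically, $1_{H_\delta}\preceq\pi_\gamma|_{H_\delta}$. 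Writing $\pi_\delta|_{H_\gamma}=\bigoplus_{O}\ell^2(O)$ over the $H_\gamma$-orbits $O$ in $\M(S).\delta$, each of the form $H_\gamma/H_\gamma^{O}$, the crux is to show that this weak containment forces a single orbit $O$ to carry $H_\gamma$-almost-invariant vectors — equivalently, $H_\gamma^{O}$ to be co-amenable in $H_\gamma$ — and then that the corresponding $\delta$-type multicurve $\delta'$ must be disjoint from $\gamma$ and stabilised, up to finite index, by all of $H_\gamma$. Running the symmetric argument produces a $\gamma$-type multicurve $\gamma'$ in the mirror-image relation to $\delta$, and comparing the two nested configurations $H_\gamma\le H_{\delta'}$ and $H_\delta\le H_{\gamma'}$ — again detected by the no-almost-invariant-vectors input — shows that $\gamma$ and $\delta$ fill the same pieces of each other's complement, i.e.\ that they are of the same topological type.

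The step I expect to be the real obstacle is precisely this extraction of a ``good'' orbit from a weak containment $1_{H_\gamma}\preceq\bigoplus_{O}\ell^2(O)$ into an infinite direct sum, together with the ruling out of exotic near-matches between different topological types. This is where the method of Theorem~\ref{main:theorem} is transferred: one uses the Dehn twists $T_{\gamma_i}$ and pseudo-Anosov elements supported in the components of $S\setminus\gamma$ — which generate $H_\gamma$ modulo finite index and the central $\mathbb{Z}^{k}$ — to push any $\delta$-type multicurve that is not carried by those components off to infinity, so that an almost-invariant vector cannot distribute itself over the unboundedly many non-compact directions and must concentrate on an orbit with amenable stabiliser. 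Everything else (the amenability bookkeeping for $H_\gamma$, the input of $C^*$-simplicity, and the abstract manipulations with weak containment) is routine once the cited results are available.
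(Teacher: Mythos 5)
Your parts (2) and (3) are fine: amenability of the stabilizer of a pants decomposition plus $C^*$-simplicity of $\M(S)$ (via \cite{BridsonHarpe}, \cite{BKKO}) gives (2) just as in the paper, and your criterion ``$\ell^2(G/H)\preceq\lambda_G$ iff $H$ is amenable'' gives (3) directly --- arguably more cleanly than the paper, which deduces (3) from (2) and the definition of $C^*$-simplicity, and your route for (3) does not even need the exclusion of the exceptional surfaces.

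The genuine gap is the converse direction of part (1). The paper does not attempt the direct analysis you sketch; it quotes Theorem A of \cite{BekkaKalantar} (Lemma \ref{lemma:bekkaKa}): if $H$ has the spectral gap property and $L$ equals its own commensurator, then weak equivalence of $\ell^2(G/H)$ and $\ell^2(G/L)$ forces $L$ to be conjugate to $H$. The two inputs are Lemma \ref{lemma:spectral} (spectral gap for $G_\gamma$ when the number of components is less than $3g-3+n$, proved by the free ``pseudo-Anosov on the complementary components'' subgroup, i.e.\ the method of Theorem \ref{main:theorem}) and Paris's computation $Com_{\M(S)}(G_\gamma)=G_\gamma$ from \cite{Paris2002}. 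Your sketch tries to reprove this special case by hand, and the step you yourself flag --- extracting from $1_{H_\gamma}\preceq\bigoplus_{O}\ell^2(O)$ a single orbit whose stabilizer $H_\gamma\cap gH_\delta g^{-1}$ is co-amenable in $H_\gamma$, and then upgrading that to conjugacy of $H_\gamma$ and $H_\delta$ --- is precisely the content of the cited theorem and is not supplied by ``pushing off to infinity'': almost invariant vectors in an infinite direct sum need not concentrate in a single summand unless one has a spectral gap that is uniform over all the orbit stabilizers, and even granting concentration, passing from co-amenability of $H_\gamma\cap gH_\delta g^{-1}$ to ``same topological type'' requires the self-commensurating property of geometric subgroups, which never enters your argument (without some such hypothesis, weak equivalence of quasi-regular representations does not imply conjugacy of the subgroups in general). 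As written, part (1) is a program rather than a proof; either carry out the concentration-plus-commensurator analysis in full, or do as the paper does and combine Lemma \ref{lemma:spectral}, Lemma \ref{lemma:bekkaKa}, $Com_{\M(S)}(G_\gamma)=G_\gamma$, and the fact that $G_\gamma$ is conjugate to $G_\delta$ iff $\gamma$ and $\delta$ are of the same type.
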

\medskip
\noindent
This paper is organized as follows. Section \ref{section:2} is devoted to preliminary for group cohomology with coefficients in unitary representations. The proof of the main theorem is given in Section \ref{section:4}. \textcolor{black}{The proof is divided into two general lemmas: Lemma \ref{lemma:discrete} and Lemma \ref{lemma:nondiscrete}, and concluded by a technical statement, namely Proposition \ref{proposition:properly discontinuous}, concerning actions of subgroups of mapping class groups on $\MF(S)$}. Section \ref{section:3} is mainly devoted to this proposition and Section \ref{section:5} is for irreducible decompositions and the classification up to weak equivalence.

\subsection*{Acknowledgment}
This paper is originated in questions asked to the author by his  Ph.D. advisor Professor Indira Chatterji. He wish to thank her for many valuable discussions, help and useful comments. He is also grateful to Professor Fran\c{c}ois Labourie for providing a proof for Proposition \ref{prop: mcshine} and to Professors Vincent Delecroix, Ursula Hamenst\"adt, Yair Minsky and Alain Valette for discussions related to this work. Finally, the author would like to thank the referees for their valuable comments which helped to improve the manuscript. The author is supported by China Scholarship Council (NO. 201706140166).

\section{Cohomology with coefficients in representations}\label{section:2}
\bigskip
\noindent
{\bf Cohomology and reduced cohomology.}~~~
For a discrete group $G$ and a unitary representation $(V, \pi)$, one can talk about both cohomology and reduced cohomology group of G with coefficients in $\pi$. Definitions of cohomology and reduced cohomology of discrete groups with coefficients in a representation $\pi$ are  standard, so we refer to \cite{MartinValette07},\cite{Andersenvillemoes12},\cite{BekkaHarpValette08}. We briefly recall that one defines following vector spaces for a unitary representation $(V, \pi)$:
$$Z^1(G,\pi) \doteq \left\{b:G \rightarrow V |b(gh)=b(g) + \pi(g)b(h), ~\text{for all}~ g,h \in G \right\};$$
\begin{displaymath}
\begin{aligned}
B^1(G,\pi) \doteq & \{b \in Z^1(G,\pi) | \text{there exists}~ v \in V, ~\text{such that for all}~ g \in G,\\
&\phantom{=\;\;} b(g) = \pi(g)v - v \};
\end{aligned}
\end{displaymath}
$$H^1(G,\pi) \doteq Z^1(G,\pi) / B^1(G,\pi);$$
$$\overline{H^1}(G,\pi) \doteq Z^1(G,\pi) / \overline{B^1(G,\pi)},$$
where the closure in the last one is for uniform convergence. The vector space $H^1(G,\pi)$(resp. $\overline{H^1}(G,\pi)$) is the first (resp. reduced) cohomology group with coefficients in $\pi$.

\bigskip
\noindent
{\bf Almost invariant vectors.}~~~
 The following Guichardet's theorem \textcolor{black}{provides} a way to determine if $H^1(G) = \overline{H^1}(G)$.
\begin{theorem}[\cite{MartinValette07}]\label{almostinv:MV}
Let $G$ be a finitely generated discrete group and $(V,\pi)$ be a unitary representation without nonzero invariant vectors. Then the following two are equivalent:\\
1. The associated first reduced cohomology is the same as the first cohomology, that is, $H^1(G,\pi) = \overline{H^1}(G,\pi)$;\\
2. The representation $\pi$ does not have almost invariant vectors.
\end{theorem}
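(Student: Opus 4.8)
The plan is to realize both conditions as statements about a single bounded linear operator between Banach spaces, namely the coboundary map, and then to invoke the classical closed-range characterization. First I would fix a finite generating set $T = \{t_1,\dots,t_r\}$ of $G$ (available since $G$ is finitely generated) and equip $Z^1(G,\pi)$ with the norm $\|b\|_T = \max_{1\le j\le r}\|b(t_j)\|$. The cocycle identity $b(gh) = b(g) + \pi(g)b(h)$ shows that a cocycle is determined by its values on $T$, so $\|\cdot\|_T$ is genuinely a norm; writing any $g$ as a word $t_{i_1}\cdots t_{i_k}$ and telescoping gives $b(g) = \sum_{p} \pi(t_{i_1}\cdots t_{i_{p-1}})\,b(t_{i_p})$, whence $\|b(g)\| \le |g|_T\,\|b\|_T$, where $|g|_T$ denotes word length. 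This estimate shows that the $\|\cdot\|_T$-topology coincides with the topology of uniform convergence on finite subsets that is implicit in the definition of $\overline{B^1(G,\pi)}$, and that $(Z^1(G,\pi),\|\cdot\|_T)$ is complete, since a $\|\cdot\|_T$-Cauchy sequence converges pointwise to a map that inherits the cocycle relation.

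Next I would introduce the coboundary operator $\beta : V \to Z^1(G,\pi)$ defined by $\beta(v)(g) = \pi(g)v - v$. It is linear and bounded, with $\|\beta(v)\|_T \le 2\|v\|$; its image is exactly $B^1(G,\pi)$, and its kernel is the space of $\pi$-invariant vectors, which is $\{0\}$ by hypothesis. Thus $\beta$ is an \emph{injective} bounded operator between Banach spaces. By definition, $H^1(G,\pi) = \overline{H^1}(G,\pi)$ holds precisely when $B^1(G,\pi) = \overline{B^1(G,\pi)}$, that is, exactly when $\operatorname{im}(\beta)$ is closed in $Z^1(G,\pi)$.

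The functional-analytic core is the standard dichotomy that an injective bounded operator $T$ between Banach spaces has closed range if and only if it is bounded below. If $\beta$ is bounded below, say $\|\beta(v)\|_T \ge c\|v\|$, then any sequence $\beta(v_n)$ converging in $Z^1$ is Cauchy, forcing $(v_n)$ to be Cauchy and hence $\beta(v_n) \to \beta(v)$, so the range is closed; conversely, if $\operatorname{im}(\beta)$ is closed then $\beta$ is a continuous bijection onto a Banach space, and the open mapping theorem yields a bounded inverse, i.e. $\beta$ is bounded below. Applying this to $\beta$, closedness of $B^1(G,\pi)$ is equivalent to the existence of $c>0$ with $\max_{1\le j\le r}\|\pi(t_j)v - v\| = \|\beta(v)\|_T \ge c\|v\|$ for all $v \in V$. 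The negation of this is precisely that for every $\epsilon > 0$ there is a unit vector $v$ with $\max_{1\le j\le r}\|\pi(t_j)v - v\| < \epsilon$, i.e. that $\pi$ almost has invariant vectors relative to the generating set $T$.

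The remaining step, and the place where one must be slightly careful, is to reconcile ``almost invariant relative to $T$'' with the definition ``for every finite $K \subseteq G$''. Here the same telescoping estimate does the work: for any finite $K$ put $L = \max_{g \in K}|g|_T$; then each $g \in K$ is a word of length at most $L$ in $T$, and $\|\pi(g)v - v\| \le \sum_p \|\pi(t_{i_p})v - v\| \le L\,\max_{1\le j\le r}\|\pi(t_j)v - v\|$, so control over the generators propagates to all of $K$, while the reverse direction is trivial by taking $K = T$. This shows the two formulations of ``almost has invariant vectors'' agree, closing the chain of equivalences. I expect the genuinely nontrivial ingredients to be the closed-range versus bounded-below dichotomy and the verification that $Z^1(G,\pi)$ is a Banach space in the topology matching that of $\overline{H^1}$; the rest is bookkeeping with the cocycle relation and the hypothesis that there are no nonzero invariant vectors (used only to make $\beta$ injective).
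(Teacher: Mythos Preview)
Your argument is correct and is precisely the standard proof of Guichardet's theorem: identify $Z^1(G,\pi)$ with a Banach space via a finite generating set, recognise $B^1(G,\pi)$ as the range of the injective bounded coboundary operator $\beta$, and apply the open mapping theorem to equate closedness of the range with $\beta$ being bounded below, which is the negation of almost having invariant vectors. The only cosmetic point worth noting is that words in $G$ involve $T\cup T^{-1}$, but since $\|b(t^{-1})\|=\|\pi(t^{-1})b(t)\|=\|b(t)\|$ for any cocycle $b$, your norm $\|\cdot\|_T$ and the telescoping estimate remain valid without change.

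As for comparison with the paper: there is nothing to compare, since the paper does not give a proof of this statement. It is quoted as a known result from \cite{MartinValette07} and used as a black box (to derive Corollary~\ref{main:coro2} from Theorem~\ref{main:theorem}). Your write-up supplies exactly the argument one finds in that reference.
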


\noindent
One observation is that not having almost invariant vectors is closed under taking limit, more precisely, we have the following lemma.
\begin{lemma}\label{finiteness}
Let $(V,\pi)$ be a unitary representation of $G$ and $W$ be a $G$-invariant vector subspace of $V$ such that the closure $\overline{W} = V$. Then $\pi$ does not have almost invariant vectors if and only if the representation $\pi\mid_W$ in $W$ does not have almost invariant vectors.
\end{lemma}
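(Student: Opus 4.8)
The plan is to establish the two directions of the equivalence separately; both are elementary and rely only on the triangle inequality together with the density of $W$.

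One direction is essentially immediate. Since $W$ is a $G$-invariant subspace, $\pi|_W$ is literally the restriction of $\pi$ to $W$ with the same inner product, so any $w \in W$ satisfying $\max_{g \in K}\|\pi(g)w - w\| < \epsilon\|w\|$ witnesses the very same inequality inside $V$. Hence if $\pi|_W$ has almost invariant vectors then so does $\pi$; contrapositively, if $\pi$ has no almost invariant vectors then neither does $\pi|_W$.

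For the converse — the only direction with any content — I would argue as follows. Assume $\pi$ has almost invariant vectors and fix a finite subset $K \subseteq G$ and $\epsilon > 0$. Apply the almost-invariance hypothesis with tolerance $\epsilon/2$ to obtain a unit vector $v \in V$ with $\max_{g\in K}\|\pi(g)v - v\| < \epsilon/2$, and then use $\overline{W} = V$ to choose $w \in W$ with $\|v - w\| < \delta$, where $\delta$ is a small parameter to be pinned down afterwards. By unitarity of each $\pi(g)$ and two applications of the triangle inequality,
$$\|\pi(g)w - w\| \le \|w - v\| + \|\pi(g)v - v\| + \|v - w\| < 2\delta + \frac{\epsilon}{2} \qquad \text{for every } g \in K,$$
while $\|w\| \ge \|v\| - \delta = 1 - \delta$. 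It then suffices to take $\delta$ small enough that $2\delta + \frac{\epsilon}{2} < \epsilon(1-\delta)$, for instance $\delta < \epsilon/(4 + 2\epsilon)$, which forces $\max_{g\in K}\|\pi(g)w - w\| < \epsilon\|w\|$. Since $K$ and $\epsilon$ were arbitrary, $\pi|_W$ has almost invariant vectors.

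The proof presents no genuine obstacle; the only point requiring care is the bookkeeping of constants in the last step — one must fix $v$ (hence $\|v\| = 1$) before choosing $\delta$, and then check that the additive error $2\delta$ introduced by swapping $v$ for $w$, together with the multiplicative loss coming from $\|w\| \ge 1 - \delta$, can both be absorbed into the slack $\epsilon/2$ left in the estimate for $\|\pi(g)v - v\|$.
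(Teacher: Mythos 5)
Your proof is correct and follows essentially the same route as the paper's: both directions reduce to approximating a vector of $V$ by one of $W$ (density) and transferring the near-invariance estimate via unitarity and the triangle inequality, your version being the contrapositive formulation with explicit $\epsilon/2$ and $\delta$ bookkeeping where the paper instead keeps the constant $\epsilon$ by a limiting argument. No gap; nothing further is needed.
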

\begin{proof}
Suppose that the pair $(K,\epsilon)$, where $K$ is a finite subset of G and $\epsilon > 0$, is given by the condition that $\pi|_W$ does not have almost invariant vector. Given any element $\xi \in V-W$, there is a sequence of elements $\{\xi_n\} \subseteq W$ such that $\xi_n \rightarrow \xi$ as $n \rightarrow \infty$. Then, for $n$ enough large, we have: $$\max_{g \in K}\parallel\pi(g)\xi-\xi\parallel = \max_{g \in K}\parallel\pi(g)\xi - \pi(g)\xi_n + \pi(g)\xi_n - \xi_n +\xi_n - \xi\parallel$$\noindent$$ \geq  \max_{g \in K}\parallel\pi(g)\xi_n -\xi_n \parallel -2\max_{g \in K}\parallel \xi_n - \xi\parallel \geq \epsilon \parallel \xi \parallel - \delta.$$
Now $\delta$ can be enough small, so $$\max_{g \in K}\parallel\pi(g)\xi-\xi\parallel \geq \epsilon \parallel \xi \parallel,$$ Which completes the proof of one direction. The opposite direction is obvious.
\end{proof}

\medskip
\noindent
Another easy observation is that, in order to show a representation of group does not have almost invariant vectors, one only need to pass to a subgroup. That is,
\begin{lemma}\label{almostinv:subgroup}
A unitary representation $(\pi, V)$ of a group $G$ does not have almost invariant vectors iff there exists a subgroup $H$ of $G$ such that the unitary representation $(\pi|_{H}, V)$ of $H$ does not have almost invariant vectors.
\end{lemma}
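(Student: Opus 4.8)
The plan is to prove the two directions separately, both of which become essentially tautological once one rewrites ``does not have almost invariant vectors'' in contrapositive form: a unitary representation $(\rho, W)$ of a group $\Gamma$ fails to have almost invariant vectors precisely when there exist a \emph{finite} subset $K \subseteq \Gamma$ and a constant $\epsilon > 0$ such that $\max_{g \in K} \|\rho(g)w - w\| \geq \epsilon\|w\|$ for every $w \in W$.

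For the forward direction, one simply takes $H = G$: since $\pi|_{G} = \pi$, the hypothesis that $(\pi, V)$ has no almost invariant vectors is literally the statement that $(\pi|_{G}, V)$ has none, so the required subgroup exists. For the converse, suppose $H \leq G$ is a subgroup such that $(\pi|_{H}, V)$ has no almost invariant vectors, and let this be witnessed by a finite set $K \subseteq H$ and $\epsilon > 0$, i.e. $\max_{g \in K}\|\pi(g)v - v\| \geq \epsilon\|v\|$ for all $v \in V$. Since $H$ is a subgroup of $G$, the set $K$ is also a finite subset of $G$, and $\pi(g) = \pi|_{H}(g)$ for every $g \in K$; hence the very same pair $(K, \epsilon)$ witnesses that $(\pi, V)$, viewed as a representation of $G$, has no almost invariant vectors.

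There is no genuine obstacle here; the only point worth recording is that the quantifier structure is favorable. ``Not having almost invariant vectors'' is an \emph{existential} statement over finite subsets together with a positive $\epsilon$, and a finite subset of $H$ is still a finite subset of $G$, so passing to a subgroup can only make it easier, never harder, to exhibit the required $(K,\epsilon)$. This lemma will be applied in Section \ref{section:4} to reduce the verification for $\M(S)$ to a conveniently chosen subgroup.
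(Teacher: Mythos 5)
Your proof is correct: the forward direction is handled by taking $H=G$, and the converse follows because a witnessing pair $(K,\epsilon)$ for $H$, with $K$ finite and contained in $H\subseteq G$, is equally a witnessing pair for $G$. The paper states this lemma without proof as an easy observation, and your argument is exactly the intended one, so there is nothing further to compare.
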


\bigskip
\noindent
{\bf Amenable groups.}~~~
A basic strategy in this article is to use the regular representation of free group $\mathbb{F}_2$ of rank 2, so the following theorem is of fundamental importance.
\begin{theorem}[\cite{Eymard72}]\label{Eym}
For the left regular representation $\pi$ of a finitely generated discrete group G on $\ell^2(G)$, $\pi$ \textcolor{black}{has almost} invariant vectors if and only if G is amenable.
\end{theorem}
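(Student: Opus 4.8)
The plan is to prove this classical equivalence by identifying "almost having invariant vectors for $\pi$ on $\ell^2(G)$" with the \emph{F\o lner condition}, one of the standard characterizations of amenability. Throughout, $\pi$ acts by $(\pi(g)\xi)(x) = \xi(g^{-1}x)$, and I recall that $G$ satisfies the F\o lner condition when for every finite $K \subseteq G$ and every $\epsilon > 0$ there is a finite nonempty $F \subseteq G$ with $|gF \triangle F| < \epsilon |F|$ for all $g \in K$ (so that $\pi(g)\mathbf{1}_F = \mathbf{1}_{gF}$). The forward implication is routine; the real content is the converse, where one must manufacture combinatorial F\o lner sets out of analytic almost-invariant vectors.

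\emph{Amenable $\Rightarrow$ almost invariant vectors.} Fixing $(K,\epsilon)$, I would choose a F\o lner set $F$ for the data $(K,\epsilon^2)$ and set $\xi = |F|^{-1/2}\mathbf{1}_F$. Then $\|\xi\| = 1$ and $\pi(g)\xi = |F|^{-1/2}\mathbf{1}_{gF}$, whence $\|\pi(g)\xi - \xi\|^2 = |gF \triangle F|/|F| < \epsilon^2$ for all $g \in K$. Thus $\pi$ almost has invariant vectors.

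\emph{Almost invariant vectors $\Rightarrow$ amenable.} Given $(K,\epsilon)$, take a unit vector $\xi$ with $\|\pi(g)\xi - \xi\|$ small for $g \in K$. First I would replace $\xi$ by $|\xi|$: since $\pi(g)|\xi| = |\pi(g)\xi|$ and $\bigl||a| - |b|\bigr| \le |a-b|$ pointwise, $|\xi| \ge 0$ remains almost invariant with $\||\xi|\| = 1$, so assume $\xi \ge 0$. Next I pass to $\ell^1(G)$ by squaring: setting $w = \xi^2$ gives $w \ge 0$ and $\|w\|_1 = \|\xi\|_2^2 = 1$, and using $|a^2-b^2| = |a-b|\,|a+b|$ together with Cauchy--Schwarz,
$$\|\pi(g)w - w\|_1 = \sum_x |\xi(g^{-1}x)-\xi(x)|\,|\xi(g^{-1}x)+\xi(x)| \le \|\pi(g)\xi - \xi\|_2\,\|\pi(g)\xi + \xi\|_2 \le 2\|\pi(g)\xi - \xi\|_2 .$$
Hence $w$ is an almost invariant unit vector in $\ell^1$ (Reiter's condition). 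Finally I extract a F\o lner set by the layer-cake decomposition: with $F_t = \{x : w(x) > t\}$ one has the coarea identities $\|w\|_1 = \int_0^\infty |F_t|\,dt$ and $\|\pi(g)w - w\|_1 = \int_0^\infty |gF_t \triangle F_t|\,dt$. Choosing $\xi$ so that $\sum_{g \in K}2\|\pi(g)\xi-\xi\|_2 < \epsilon$, summing the second identity over $g \in K$ and comparing with $\|w\|_1$ forces, by an averaging argument in $t$, some level with $\sum_{g\in K}|gF_t \triangle F_t| < \epsilon|F_t|$ and $F_t \neq \emptyset$; this $F = F_t$ is the required F\o lner set, so $G$ is amenable.

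The main obstacle is this converse, and within it the transfer from the Hilbert-space setting to finite subsets. The two delicate points are the squaring step, where the factorization $|a^2-b^2|=|a-b|\,|a+b|$ fed into Cauchy--Schwarz is exactly what converts $\ell^2$-almost-invariance into $\ell^1$-almost-invariance while preserving normalization, and the coarea/pigeonhole extraction, which is what bridges the analytic (Reiter) and combinatorial (F\o lner) formulations of amenability. Finite generation of $G$ plays no essential role beyond placing us in the stated setting.
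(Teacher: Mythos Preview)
Your argument is correct and is the standard route through Reiter's property and the layer-cake (coarea) extraction of F\o lner sets; the two steps you flag as delicate are handled properly.

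However, there is nothing to compare against: in the paper this theorem is stated as a background result with a citation to \cite{Eymard72} and is not proved. The paper simply invokes it (via Remark~\ref{Remark:Freegroup}) to know that the regular representation of $\mathbb{F}_2$ lacks almost invariant vectors. So your write-up supplies a proof where the paper gives none; it is fine as an exposition of the cited result, but it is not something the paper itself undertakes.
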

\begin{remark}\label{Remark:Freegroup}
Since  $\mathbb{F}_2$ is not amenable, the left regular representation  of  $\mathbb{F}_2$ on $\ell^2( \mathbb{F}_2)$ does not have almost invariant vectors. We will regard $\ell^2( \mathbb{F}_2)$ as $\ell^2-$functions on vertices of the Cayley graph of  $\mathbb{F}_2$ with respect to a chosen generating set, and thus further identify $\ell^2( \mathbb{F}_2)$ with the vector space V, where $$V = \textcolor{black}{\left\{\sum_i\alpha_i g_i: \sum_i |\alpha_i|^2 < \infty, \alpha_i \in  \mathbb{C}, g_i \in \mathbb{F}_2\right\}}.$$
\end{remark}
\section{Generalized Thurston measures and dynamics on measured foliation spaces}\label{section:3}
\bigskip
\noindent
 In this section we will describe the integral theory on the space of measured foliations and the action of \textcolor{black}{subgroups of mapping class groups} on the space of measured foliations. A subgroup of $\M(S)$ in which all elements except the identity are pseudo-Anosov mapping classes will be called a pseudo-Anosov subgroup.

\subsection{Measures and $L^2-$theory on $\MF(S)$}

\textcolor{black}{\subsubsection{Generalized Thurston measures on $\MF(S)$}}\label{section:measure}

The space of measured foliations $\MF(S)$ of a surface $S$ is the set of equivalence classes of transversal measured (singular) foliations on $S$. Using train tracks, one can show that $\MF(S)$ has a piecewise linear integral structure such that $\M(S)$ acts on it as automorphisms (that is, preserves this piecewise linear integral structure)\cite{Thurston1979}. Therefore, in such local PL coordinates, $\M(S)$ acts as linear transformations.

\medskip
\noindent
A consequence of this PL structure is that $\MF(S)$ can be equipped with a $\M(S)-$invariant measure $\mu_{Th}$, called {\it the Thurston measure} on $\MF(S)$. Moreover, this measure can be generalized to obtain a family of locally finite, ergodic $\M(S)-$invariant measures $\mu^{[(\cal{R}, \gamma)]}_{Th}$ on $\MF(S)$ for complete pairs $(\mathcal{R}, \gamma)$, which will be called {\it generalized Thurston measures}. We present a brief summary of the construction of generalized Thurston measures $\mu^{[(\cal{R}, \gamma)]}_{Th}$ according to \cite{Lindenstrauss-Mirzakhani08}.

\medskip
\noindent
Let $\gamma = \sum_ic_i\gamma_i, c_i > 0$ be a multi-curve on $S$, that is, $\gamma$ is a collection of isotopy classes of  pairwise distinct, pairwise disjoint essential simple closed curves $\{\gamma_i\}$ on $S$ so that each curve has been weighted by $c_i > 0$. After fixing a hyperbolic structure on $S$, one can think a multi-curve $\gamma = \sum_ic_i\gamma_i, c_i > 0$ as a collection of simple closed geodesics $\{\widetilde{\gamma_i}\}$ on $S$ with $\widetilde{\gamma_i}$ labeled by a positive real number $c_i$, where $\widetilde{\gamma_i}$ is the unique geodesic representative in $\gamma_i$. We will use $\gamma$ to denote both the formal sum $\sum_ic_i\gamma_i$ and the subset $\bigsqcup\widetilde{\gamma_i}$ of $S$. Cutting $S$ along $\gamma$, one obtains a decomposition into a disjoint union $$\overline{S - \gamma} = \bigsqcup T_i,$$ where $\{T_i\}$ is a collection of subsurfaces of $S$ with boundary smoothly embedded in $S$. For $$\mathcal{R} = \bigsqcup S_i$$ with  $\{S_i\} \subseteq \{T_i\}$, the pair $(\mathcal{R}, \gamma)$ will be called {\it a complete pair}. For a complete pair $(\mathcal{R} = \bigsqcup S_i , \gamma)$, define $$\MF(\mathcal{R}) = \prod_{i} \MF^{*}(S_{i})$$ where $\MF^{*}(S_i) = \MF(S_i) \bigcup 0_{S_i}$ in which $0_{S_i}$ is the zero foliation on $S_i$. The space $\MF(\mathcal{R})$ can be $\M(\mathcal{R}, \gamma)-$embeded on $\MF(S)$ via enlarging boundary curves [See \cite{Fathi2012}, Expos\'{e} 6.6 for enlarging curves]. Denote by $\mathcal{M}(\mathcal{R})$ the image of this embedding. This set is endowed with the product measure $\mu_{\mathcal{R}} = \prod \mu^i_{Th}$, where $\mu^{i}_{Th}$ is the Thurston measure of $S_i$. Define also $$M(\mathcal{R}, \gamma) = \{\mathcal{F} + \gamma : \mathcal{F} \in \mathcal{M}(\mathcal{R})\} \subseteq \MF(S).$$ The inclusion induces a measure on $\MF(S)$, denoted by $\mu^{[(\cal{R}, \gamma)]}_{Th}$ and supported on the set of $\M(S)-$orbits of $M(\cal{R}, \gamma)$, from the product measure $\mu_{\mathcal{R}}$.

\medskip
\noindent
Special cases are when $\cal{R} = \emptyset$ and $\gamma$ is the isotopy class of a non-separating curve, or when $\cal{R} = S$ and $\gamma = \emptyset$. The corresponding measure in the case of $\cal{R} = \emptyset$ is a discrete measure, denoted by $\mu_{\gamma}$ and supported on $\M(S).\gamma$ which is regarded as a subset of $\MF(S)$, while in the case of $\gamma = \emptyset$ it is exactly the Thurston measure $\mu_{Th}$ on $\MF(S)$.

\medskip
\noindent
The following remarkable theorem indicates that generalized Thurston measures $\mu^{[(\cal{R}, \gamma)]}_{Th}$ are exactly all locally finite, $\M(S)-$invariant, ergodic measures on $\MF(S)$.
\begin{theorem}[Hamenst\"adt\cite{Hamenstaedt2009},Lindenstrauss-Mirzakhani\cite{Lindenstrauss-Mirzakhani08}]
Any locally finite $\M(S)-$invariant ergodic measure on $\MF(S)$, up to a constant multiple, is in the form of $\mu^{[(\cal{R}, \gamma)]}_{Th}$, where $(\mathcal{R}, \gamma)$ is a complete pair.
\end{theorem}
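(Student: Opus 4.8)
The plan is to classify the measures by reducing, via the natural stratification of $\MF(S)$, to a single rigidity statement on the ``maximal'' stratum of a subsurface, importing the dynamical input of the cited works only at that last step. Recall that each $F \in \MF(S)$ has a canonical decomposition: its compact leaves form a weighted multicurve (the cylinder part), and the remainder is a foliation filling a complementary subsurface. Grouping foliations by the topological type $[(\mathcal{R},\gamma)]$ of this decomposition (the $\M(S)$--orbit of the pair consisting of the filled subsurface $\mathcal{R}$ and the weighted multicurve $\gamma$) partitions $\MF(S)$ into countably many $\M(S)$--invariant Borel strata $\MF_{[(\mathcal{R},\gamma)]}$. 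Since these strata are invariant and countably many, ergodicity of a locally finite invariant measure $\mu$ forces $\mu$ to be concentrated on exactly one of them; so I would fix a stratum and work there.

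Second, I would exhibit each stratum as an induced $\M(S)$--space and peel off the cylinder part. Writing $c$ for a multicurve of the fixed topological type, the foliations supported on $c$ together with a filling foliation on the complementary subsurface $\mathcal{R}_c$ are parametrized by a weight vector $w \in \mathbb{R}_{>0}^{k}$ on the components of $c$ and a point of the filling locus $\MF'(\mathcal{R}_c)$. As $c$ ranges over its (discrete) $\M(S)$--orbit, the stratum becomes the space induced from the stabilizer $H = \mathrm{Stab}(c_0)$ acting on $\mathbb{R}_{>0}^{k} \times \MF'(\mathcal{R}_{c_0})$. By the correspondence between ergodic measures for an induced action and ergodic measures for the acting subgroup on the fiber, it suffices to classify the $H$--invariant locally finite ergodic measures on this fiber. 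Now $H$ acts on the weight coordinate $w$ only through a finite group of permutations of equal-type components, while it acts on $\MF'(\mathcal{R}_{c_0})$ through a finite extension of the subsurface mapping class group $\M(\mathcal{R}_{c_0})$. Disintegrating, ergodicity pins the weight factor to a single finite orbit (an atomic measure recorded precisely by the datum $\gamma$, up to the overall scaling absorbed by the ``constant multiple''), and reduces the problem to a single statement: the only locally finite ergodic $\M(\mathcal{R}_{c_0})$--invariant measure on the filling locus $\MF'(\mathcal{R}_{c_0})$ is, up to scale, the Thurston measure $\mu_{Th}$ of that subsurface.

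Third comes the crux, which is exactly the uniqueness of the Thurston measure on the maximal (filling) stratum of a surface and is where essentially all the difficulty of \cite{Lindenstrauss-Mirzakhani08} and \cite{Hamenstadt09} resides. I would reduce it to the dynamics of the mapping class group in train--track (PL) coordinates: the filling foliations carried by a complete train track form a linear cone, the mapping class group is generated by the splitting and folding maps along splitting sequences, and the filling locus is coded by the infinite admissible splitting sequences. One then has two routes. Following Hamenst\"adt, one treats the splitting sequences as a countable Markov shift, so that invariant measures become shift--invariant measures; a Perron--Frobenius/thermodynamic argument using the expansion of the splitting maps, together with the local product structure of the coding, singles out the Thurston (Lebesgue--class) measure, while local finiteness supplies the summability/recurrence that excludes every other ergodic component. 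Alternatively, following Lindenstrauss--Mirzakhani, one couples $\mu$ with a transverse foliation to obtain a measure on the space of quadratic differentials and invokes the ergodicity and measure rigidity of the Teichm\"uller geodesic flow with respect to the Masur--Veech class, again using local finiteness (non-divergence into the thin parts) to prevent escape of mass and to force self-similarity of the conditional measures.

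The main obstacle is precisely this third step: controlling a non-uniformly hyperbolic, non-lattice action on the maximal stratum and proving that no exotic locally finite ergodic measure can survive. The first two steps are essentially formal --- the stratification is a bookkeeping of topological types, and the induced-action reduction together with the atomicity of the cylinder weights follow from soft ergodic theory and the linearity of the $\M(S)$--action in PL coordinates. Once the crux is granted for every subsurface (by induction on complexity, the base case being the uniqueness of the Thurston measure itself), reassembling the atomic cylinder factor with the Thurston factor on each stratum yields exactly the generalized Thurston measure $\mu_{Th}^{[(\mathcal{R},\gamma)]}$, and the classification follows.
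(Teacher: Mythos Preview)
The paper does not prove this theorem: it is stated as a cited result of Hamenst\"adt and Lindenstrauss--Mirzakhani, followed only by a summary of how the measures $\mu^{[(\mathcal{R},\gamma)]}_{Th}$ are constructed. There is therefore no ``paper's own proof'' to compare your proposal against.

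That said, your outline is a faithful high-level sketch of the Lindenstrauss--Mirzakhani strategy (with Hamenst\"adt's alternative mentioned at the crux). The stratification by topological type of the cylinder/filling decomposition, the reduction by ergodicity to a single stratum, the induced-action description over the discrete orbit of the underlying multicurve, and the identification of the weight factor as a single finite orbit are all correct bookkeeping steps. One small sharpening: in your second step, the conclusion that the weight vector is pinned to a finite orbit is not really a disintegration argument but the observation that the projection to $\mathbb{R}^k_{>0}$ modulo the finite permutation group is $H$-invariant, so ergodicity forces the pushforward to be a Dirac mass; you should say it that way. You are also right that the entire content of the theorem lives in your third step --- the uniqueness of the Thurston measure on the filling locus --- and that you are importing it wholesale from the references rather than proving it. As a proof sketch that is honest, but it means your proposal is a roadmap to the cited proofs rather than an independent argument.
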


\subsubsection{Associated $L^2-$theory over $\MF(S)$}\label{section:des}
{\bf The case of discrete measures.}~~~
 Recall that when $\cal{R} = \emptyset$, $\mu^{[(\cal{R}, \gamma)]}_{Th}$ is the discrete measure supported on the set $\M(S).\gamma$, where $\M(S).\gamma$ is regarded as a subset of $\MF(S)$. We will first deal with the case that $\gamma$ is the isotopy class of an essential simple closed curve on $S$ and denote the measure by $\mu_{\gamma}$.

\medskip
\noindent
 Let $X_{\gamma} = \mathcal{C}^0_{\gamma}(S)$ be the subset of vertices of the curve complex consisting of $\M(S).\gamma$. By considering the Dirac measure supported on \textcolor{black}{$X_{\gamma}$}, one can define the Hilbert space $\ell^2(X_{\gamma})$. It is clear that  $\ell^2(X_{\gamma})$ is $\M(S)-$equivariantly isomorphic to $L^2(\MF(S),\mu_{\gamma})$. On the other hand, \textcolor{black}{let} $G_{\gamma} = \M(S,\gamma)$ \textcolor{black}{=$Stab_{\gamma}(\M(S))$} be the set of all elements in $\M(S)$ that fix $\gamma$, then $\ell^2(X_{\gamma})$ can be further $\M(S)-$equivariantly identified with $\ell^2(\M(S)/G_{\gamma})$. These two spaces give the same unitary representation of $\M(S)$, actually we have

\begin{theorem}[Paris\cite{Paris2002}]\label{cc:irreducible}
The infinite dimensional unitary representation of $\M(S)$ given by $\ell^2(\M(S)/G_{\gamma})$ is irreducible.
\end{theorem}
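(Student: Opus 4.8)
The plan is to recognise $\ell^2(\M(S)/G_\gamma)$, with $G_\gamma=\M(S,\gamma)$, as the quasi-regular representation $\lambda_{\M(S)/G_\gamma}$ and to apply the classical commensuration criterion for irreducibility of such representations. Recall (see for instance \cite{BekkaHarpValette08}) that for a subgroup $H$ of a discrete group $G$ the commutant of $\lambda_{G/H}(G)$ is generated by the Hecke operators $T_{HgH}$ that are bounded, and $T_{HgH}$ is bounded exactly when the double coset $HgH$ is a finite union of left $H$-cosets, i.e. when $[H:H\cap gHg^{-1}]<\infty$; consequently $\lambda_{G/H}$ is irreducible whenever $H$ coincides with its commensurator $\mathrm{Comm}_G(H)=\{g\in G:\ [H:H\cap gHg^{-1}]<\infty \text{ and } [gHg^{-1}:H\cap gHg^{-1}]<\infty\}$. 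So it suffices to prove $\mathrm{Comm}_{\M(S)}(G_\gamma)=G_\gamma$.

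To that end, fix $f\in\M(S)$ with $f\notin G_\gamma$, so that $\delta:=f(\gamma)\ne\gamma$ as isotopy classes. Since $G_\gamma\cap fG_\gamma f^{-1}=\M(S,\gamma)\cap\M(S,\delta)$ is exactly the stabilizer of $\delta$ inside $G_\gamma$, the orbit--stabilizer relation gives $[G_\gamma:G_\gamma\cap fG_\gamma f^{-1}]=|G_\gamma\cdot\delta|$; it is therefore enough to show that this orbit is infinite, as that forces $f\notin\mathrm{Comm}_{\M(S)}(G_\gamma)$. I would argue this in two cases. If $i(\gamma,\delta)>0$, then the Dehn twist $T_\gamma$ lies in $G_\gamma$ and the standard growth estimate for intersection numbers under twisting shows the curves $\{T_\gamma^{\,n}(\delta)\}_{n\in\mathbb Z}$ are pairwise non-isotopic, so $G_\gamma\cdot\delta$ is infinite. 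If $i(\gamma,\delta)=0$ with $\delta\ne\gamma$, realise $\delta$ disjointly from $\gamma$; then $\delta$ lies in a component $S_0$ of the cut surface $S\setminus\gamma$ and is essential in $S_0$ (were it boundary-parallel in $S_0$, that boundary would be a copy of $\gamma$ or a component of $\partial S$, contradicting $\delta\ne\gamma$ or the essentiality of $\delta=f(\gamma)$). In particular $S_0$ is neither a pair of pants nor an annulus, so it has positive complexity and carries a pseudo-Anosov class $\psi$; extending $\psi$ by the identity puts it in $G_\gamma$, and since pseudo-Anosov maps have no periodic isotopy class of essential simple closed curve, the orbit $\{\psi^{\,n}(\delta)\}$ is again infinite. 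Either way $f\notin\mathrm{Comm}_{\M(S)}(G_\gamma)$, giving $\mathrm{Comm}_{\M(S)}(G_\gamma)=G_\gamma$ and hence irreducibility; infinite-dimensionality is immediate since $\M(S)/G_\gamma$ is in bijection with the infinite set $\M(S).\gamma$.

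I expect the main obstacle to be the first step — pinning down which Hecke operators are bounded and that the commutant is generated by them, so that triviality of the Hecke algebra is genuinely equivalent to self-commensuration — rather than the geometric case analysis, which is mapping-class-group folklore. A secondary point needing care is the low-complexity bookkeeping in the disjoint case: excluding a pair-of-pants (or annular) component and using the standing hypothesis $3g+n\ge 4$, so that $S$ is not a torus and the relevant piece of $S\setminus\gamma$ really supports a pseudo-Anosov.
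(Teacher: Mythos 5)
The paper gives no proof of this statement at all---it is quoted from \cite{Paris2002}---and your argument is essentially Paris's original one: view $\ell^2(\M(S)/G_\gamma)$ as a quasi-regular representation, show the stabilizer is its own commensurator, and invoke the Mackey/Burger--de la Harpe criterion (indeed the present paper later uses precisely the equality $Com_{\M(S)}(G_\gamma)=G_\gamma$ in the proof of Theorem \ref{theorem:classification}). Your proposal is correct; the only imprecision is in the functional-analytic preamble (boundedness of the operator attached to a double coset requires it to be a finite union of left \emph{and} of right cosets, and one must argue the commutant is controlled by such kernels), but this does not matter since your two-case geometric argument actually establishes the stronger, cleaner hypothesis that every nontrivial $G_\gamma$-orbit on $\M(S)/G_\gamma$ is infinite, i.e.\ $[G_\gamma:G_\gamma\cap fG_\gamma f^{-1}]=\infty$ for all $f\notin G_\gamma$, from which triviality of the commutant follows directly.
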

\begin{remark}\label{remark:multicurves}
This theorem was proved in a more general setting for \textcolor{black}{1-multi-curves on $S$, that is, $\gamma = \sum c_i\gamma_i$ with $c_i = 1$ for all $i$}.
\end{remark}
\noindent
Thus, in particular, this representation does not have non-zero invariant vectors. Meanwhile, the irreducibility also allows us to describe $\ell^2(\M(S)/G_{\gamma})$ more geometrically.

\bigskip
\noindent
The first description of $\ell^2(\M(S)/G_{\gamma})$ is classical. For $f\in \ell^2(X_{\gamma})$, let $Supp(f)=\{v \in X_{\gamma}: f(v) \neq 0 \}$. The function $f$ has {\it compactly support} if the cardinal of $Supp(f)$ is finite. Define the subspace $W$ of $\ell^2(X_{\gamma})$ as the set of elements in $\ell^2(X_{\gamma})$ which have compactly support. As $X_{\gamma}$ is discrete, the following notation will be used to represent $f \in W$: $f= \sum^n_{i=1}k_i\alpha_i$. Note that $W$ is $\M(S)-$invariant and the closure $\overline{W}$ of $W$ in $\ell^2(X_{\gamma})$ is then $\ell^2(X_{\gamma})$ itself. This description will be used in the proof of the main theorem in the case of discrete measures.

\bigskip
\noindent
The second description of $\ell^2(\M(S)/G_{\gamma})$ needs more explanations. Let $\T(S)$ be the Teichm\"{u}ller space of $S$, and for each point $\mathcal{X}$ of $\T(S)$, define a function on $X_{\gamma}$ by $$f_{\mathcal{X}}(\alpha)=e^{-\ell_{\mathcal{X}}(\alpha)}, \alpha \in X_{\gamma}$$ where $\ell_{\mathcal{X}}(\alpha)$ is the length of the unique geodesic in the isotopy class $\alpha$.
\begin{proposition}\label{prop: mcshine}
The function defined above is actually in $\ell^2(X_{\gamma})$.
\end{proposition}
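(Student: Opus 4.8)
The plan is to prove directly that $f_{\mathcal X}$ is square-summable on $X_\gamma$, i.e.\ that
$\|f_{\mathcal X}\|^2 = \sum_{\alpha\in X_\gamma} e^{-2\ell_{\mathcal X}(\alpha)}<\infty$.
First I would fix the hyperbolic metric on $S$ determined by $\mathcal X$ and note that every element of $X_\gamma=\M(S).\gamma$ is an isotopy class of an essential simple closed curve, hence realized by a unique closed geodesic; since a closed geodesic determines its free homotopy class (and free homotopy coincides with isotopy for simple closed curves), the assignment $\alpha\mapsto(\text{geodesic representative of }\alpha)$ injects $X_\gamma$ into the set $\mathcal G$ of \emph{all} closed geodesics of $(S,\mathcal X)$. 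Thus it suffices to establish the a priori stronger bound $\sum_{g\in\mathcal G} e^{-2\ell(g)}<\infty$.

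The only genuine input is a growth estimate for closed geodesics. Write $P_{\mathcal X}(L)=\#\{g\in\mathcal G:\ell(g)\le L\}$. Two standard facts suffice: (i) the systole of $(S,\mathcal X)$ is positive, so $P_{\mathcal X}(L)=0$ for $L$ below some $\ell_0=\ell_0(\mathcal X)>0$; and (ii) there is a constant $C=C(\mathcal X)$ with $P_{\mathcal X}(L)\le Ce^{L}$ for all $L\ge 0$. For (ii) I would use that $S$ is uniformized by a finitely generated Fuchsian group $\Gamma$ whose critical exponent is at most $1$, so that the number of closed geodesics of length $\le L$ is $O(e^{L})$ --- by a packing/orbit-counting argument in $\mathbb H^2$, or by the prime geodesic theorem, which in fact gives $P_{\mathcal X}(L)\sim e^{L}/L$. (One could instead invoke Mirzakhani's asymptotics $\#\{\alpha\in X_\gamma:\ell_{\mathcal X}(\alpha)\le L\}\sim c(\mathcal X)L^{6g-6+2n}$, which gives polynomial and hence more than sufficient growth, or, when $S$ has a cusp or boundary, a McShane-type identity giving the convergence of $\sum e^{-\ell}$ over simple closed geodesics outright; but the crude exponential bound is uniform over all $S_{g,n}$ with $3g+n\ge 4$ and is all that is needed here.)

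With (i) and (ii) in hand I would conclude by splitting the series into unit-length shells: for $g\in\mathcal G$ set $k=\lceil\ell(g)\rceil$, so that $e^{-2\ell(g)}<e^{-2(k-1)}$ and the number of such $g$ is at most $P_{\mathcal X}(k)\le Ce^{k}$; summing over $k\ge\lceil\ell_0\rceil$ yields
\[
\sum_{g\in\mathcal G} e^{-2\ell(g)}\;\le\;\sum_{k\ge\lceil\ell_0\rceil} Ce^{k}e^{-2(k-1)}\;=\;Ce^{2}\sum_{k\ge 1}e^{-k}\;<\;\infty,
\]
which gives $f_{\mathcal X}\in\ell^2(X_\gamma)$. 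Everything past the growth estimate is routine bookkeeping, so the step I expect to require the most care is fact (ii): making sure the exponential bound is stated uniformly for surfaces with cusps and with geodesic boundary, and keeping the growth exponent strictly below the decay rate $2$ so that the shell sum converges.
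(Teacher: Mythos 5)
Your argument is correct, but it takes a genuinely different route from the paper. The paper disposes of the proposition in one line by citing the polynomial growth of \emph{simple} closed geodesics (Birman--Series, or Mirzakhani's asymptotics $\sim c(\mathcal X)L^{6g-6+2n}$), of which the convergence of $\sum_{\alpha\in X_\gamma}e^{-2\ell_{\mathcal X}(\alpha)}$ is an immediate corollary. You instead discard simplicity altogether: you inject $X_\gamma$ into the set of all closed geodesics and use the crude exponential count $P_{\mathcal X}(L)\le Ce^{L}$ coming from the fact that a finitely generated Fuchsian group has critical exponent at most $1$ (prime geodesic theorem, or a packing/orbit-counting argument), then win because the decay exponent $2$ strictly beats the growth exponent $1$. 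Both are complete proofs. Your version is more elementary in the sense that it avoids the genuinely deep input about simple curves and would apply verbatim to sums over non-simple classes as well; its cost is that it is rigid about the exponent --- it only works because the weight is $e^{-2\ell}$ with $2>1$, whereas the paper's polynomial-growth input gives convergence of $\sum e^{-s\ell_{\mathcal X}(\alpha)}$ over $X_\gamma$ for every $s>0$ and so is the more robust statement to have on record. The one step you rightly flag as needing care, uniformity of the exponential bound for cusped or bordered hyperbolic structures, is indeed standard (critical exponent $\le 1$ always, and $<1$ in the convex cocompact case), so nothing is missing; just note that the injection $\alpha\mapsto(\text{geodesic representative})$ should be phrased for unoriented geodesics, which does not affect the estimate.
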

\begin{proof}
 It amounts to say $$\sum_{\alpha \in X_{\gamma}}e^{-2\ell_{\mathcal{X}}(\alpha)} < \infty.$$ Thus this proposition is a corollary of the result of \cite{BirmanSeries85} or \cite{Mirzakhani08} about the polynomial growth of simple closed geodesics.
\end{proof}

\bigskip
\noindent
 Let $W'$ be the subspace of $\ell^2(X_{\gamma})$ which consisting of finite linear combinations of elements in $\{f_{\mathcal{X}}: \mathcal{X} \in \T(S)\}$. It is also easy to see that this subspace is $\M(S)-$invariant. Also by irreducibility, the closure $\overline{W'}$ of $W'$ is $\ell^2(X_{\gamma})$.
\begin{remark}
The second description gives rise to a parametrization for $\ell^2(X_{\gamma})$ via the Teichm\"{u}ller space, thus it can be viewed as a reply to Problem 2.5 in \cite{Goldman2005} for representations under consideration.
\end{remark}
\bigskip
\noindent
For the case of $\mathcal{R} = \emptyset$ and $\gamma$ is a general integral multicurve \textcolor{black}{$\gamma = \sum k_i\gamma_i$ with $k_i \in \mathbb{N}$}, Theorem \ref{cc:irreducible} is not true in general as shown by the following
\begin{example}\label{example:reducibility}
Consider the genus 2 closed surface $S$, regarded as a quotient along boundaries of holed sphere with four disjoint open disks deleted. Let $\gamma = 2\gamma_1 +3\gamma_2, \delta = \gamma_1 + \gamma_2$, where $\gamma_1$ and $\gamma_2$ are isotopy classes of two distinct images of boundaries. Obviously, there is a mapping class $s$ that permutes the $\gamma_i$'s. Denote $H = \M(S,\gamma)$ and $H' = \M(S,\delta)$, then we have the exact sequence: $$ 1 \rightarrow H  \rightarrow H' \rightarrow \mathbb{Z}_2 \rightarrow 1.$$ That is, $H$ is a normal subgroup of $H'$ of index 2. This exact sequence allows us to define a self-map of the left cosets $\{fH\}$ as follows. Write $H'$ as $H \bigsqcup sH$. There are two $\M(S)-$invariant bijections: $$\M(S).\gamma \leftrightarrow \{[g] = gH\},$$ $$\M(S).\delta \leftrightarrow \{[f] = fH'\}.$$ As $fH' = fH \bigsqcup fsH$, the set $\{gH\}$ can be rewritten as $\{fH,fsH\}$, this reformulation induces a well-defined inversion $i: fH = [f] \mapsto [fs] = fsH$.\\

\noindent A function $\phi$ on $G/H = \{gH\}$ is called even if for every $[g] \in G/H$, $\phi([g]) = \phi(i([g]))$ and a function $\varphi$ on $G/H$ is called odd if for every $[g] \in G/H$, $\varphi([g]) = - \phi(i([g]))$.\\

\noindent Define $V_1$ to be the subset of $\ell^2(G/H)$ consisting of even functions and $V_2$ to be the subset of $\ell^2(G/H)$ consisting of odd functions. It is easy to see that such two vector spaces are non-empty, closed and $\M(S)-$invariant subspaces of $\ell^2(G/H)$.
\end{example}
\begin{remark}\label{remark1}
For any discrete measure mentioned above, the associated unitary representation has no nonzero invariant vectors.
\end{remark}
\bigskip
\noindent
{\bf The case of non-discrete measures.}~~~
For general measures, we mention one remark.
\begin{remark}\label{remark2}
If $\mathcal{R}$ is nontrivial, ergodicity of the action shows that the associated unitary representation has no nonzero \textcolor{black}{invariant vectors}.
\end{remark}
\subsection{Actions of subgroups of $\M(S)$ on $\MF(S)$}
\medskip
\noindent
\textcolor{black}{{\bf Train tracks and a construction of pseudo-Anosov mapping classes.}}~~~ For later use, we first recall some facts about train tracks and a construction of pseudo-Anosov mapping classes by Thurston. All discussions here are standard and well-known, we refer to \cite{PennerHarer},\cite{Farb2012},[\cite{Fathi2012}, Expos\'{e} 13],\cite{Thurston1988} for more details.\\

\noindent A train track $\tau$ in a surface $S$ is an embedded smooth graph with extra conditions on vertices. A train track is called {\it recurrent} if it supports a positive transverse measure, that is, a measure assigns a positive number to every edge. A {\it transversely recurrent} train track is a train track such that every edge has a nontrivial essential transverse intersection with a simple closed curve. A {\it birecurrent} train track is thus a train track that both recurrent and transversely recurrent. A {\it maximal} birecurrent train track is a birecurrent train track that cannot be a proper subtrack of any other train track. Any measured foliation is carried by a maximal train track. We only remark here that, for a maximal birecurrent train track $\tau$, the set $E(\tau)$ of all positive transverse measures on $\tau$ is a positive linear submanifolds, that is, a subset of some Euclidean space defined by a family of linear equations with the condition that all parameters are positive. For the torus $T$, the set $\MF(T)$ of linear measured foliations can be covered by four affine charts $E(\tau_i)$ associated to four maximal birecurrent train tracks. We fix these four types of train tracks as blocks and denote them by $\{\tau_1,\tau_2,\tau_3,\tau_4\}$. See [\cite{PennerHarer}, Section 2.6, Figure 2.6.1] for such four train tracks in the annulus, thus in the torus. \\

\noindent We now sketch a construction of pseudo-Anosov mapping classes given by Thurston \cite{Thurston1988}. We only discuss Thurston's construction for closed surfaces. For surfaces with boundaries, one can modify the construction without any difficulty. Let $S = S_g (g \geq 2)$ and choose two essential simple closed curves $\alpha$ and $\beta$ on $S$ so that all connected components of $S-\alpha\bigcup\beta$ are open topological disks. For each intersection point $p$ of $\alpha$ and $\beta$, one can assign a rectangle to $p$ so that $S$ has a flat structure $\sigma$ and, with respect to this flat structure, both Dehn twists $T_{\alpha}$ and $T_{\beta}$ act as affine transformations (since we have flat structure, we can talk about affine transformations) with linear parts given by elements in $PSL(\mathbb{R})$. An element in the subgroup of $\M(S)$ generated by $T_{\alpha}$ and $T_{\beta}$ is pseudo-Anosov if it has a hyperbolic linear part.\\

\noindent We now mention some facts about the set $\mathfrak{L}(S,\sigma)$ of linear measured foliations on $S$ induced by the flat structure $\sigma$ above. Note that unstable and stable foliations of pseudo-Anosov mapping classes obtained by Thurstion's construction are in $\mathfrak{L}(S,\sigma)$ and $\mathfrak{L}(S,\sigma)$ is a closed subset of $\MF(S)$. If we arrange all rectangles mentioned above on the plane such that $\alpha-$sides are horizontal and label the rectangles from left to right by $\{\Box_1,\Box_2,...,\Box_m\}$, then a linear measured foliation $\mathfrak{F} \in \mathfrak{L}(S,\sigma)$ is given by parallel lines of the plane and a train track $\tau$ in $S$ carrying $\mathfrak{F}$ has the form that the restriction of $\tau$ in each rectangle $\Box_i$ is one of $\tau_i$ and all such $\tau_i$ appearing in $\tau$ are the same. Therefore there are four types of train tracks, denoted also by $\{\tau_1,\tau_2,\tau_3,\tau_4\}$, so that $\mathfrak{L}(S,\sigma) \subseteq \bigcup_{i=1}^{4}E(\tau_i)$. A direct computation shows that linear measured foliations on $S$ induced by this flat structure are determined by weights on two edges of $\tau_i \bigcap \Box_1$, thus each $\mathfrak{L}(S,\sigma) \bigcap E(\tau_i)$ is parameterized by two free independent parameters.

\begin{lemma}\label{lemma:null}
Let $S = S_{g,n}$ be a compact surface with $3g+n \geq 5$, then each $\tau_i$ is birecurrent and the set $\mathfrak{L}(S,\sigma)$ of linear measured foliations with respect to a flat structure $\sigma$ constructed as described above is of null $\mu_{Th}-$measure.
\end{lemma}
\begin{proof}
It is obvious that each $\tau_i$ is birecurrent. We divide the proof of the rest into two cases according to whether $\tau_i$ is maximal or not. If $\tau_i$ is not maximal, then any measured foliation carried by $\tau_i$ is not maximal \cite{PennerHarer}. By [\cite{Lindenstrauss-Mirzakhani08}, Lemma 2.3], $E(\tau_i)$ has null $\mu_{Th}-$measure. If $\tau_i$ is maximal, then, as $\tau_i$ is a birecurrent train track, $E(\tau_i)$ is an open subset of $\MF(S)$ and thus every point in $E(\tau_i)$ should be determined by weights on $6g-6+2n$ edges of $\tau_i$. As remarked above that $E(\tau_i) \bigcap \mathfrak{L}(S,\sigma)$ is determined by weights on two edges of $\tau_i \bigcap \Box_1$ which can be extended to obtain $6g-6+2n$ free parameters of $E(\tau_i)$. That is to say, $E(\tau_i) \bigcap \mathfrak{L}(S,\sigma)$ is locally given by $x_3=x_4=...=x_{6g-6+2n}=0$ in $\mathbb{R}^{6g-6+2n}$ whose coordinates is given by $\{x_1,...,x_{6g-6+2n}\}$. Therefore, $E(\tau_i) \bigcap \mathfrak{L}(S,\sigma)$ is a null set. Since $\mathfrak{L}(S,\sigma) \subseteq \bigcup_{i=1}^{4} E(\tau_i)$, hence $\mathfrak{L}(S,\sigma)$ is a null set as well.
\end{proof}
\medskip
\noindent
{\bf Almost properly discontinuous action.}~~~
We introduce a concept for a group action on a Borel space (that is, a topological space endowed with a Radon measure) which is weaker than usual properly discontinuous action.
\begin{definition}
Let G be a group and $(X,\mu)$ be a Borel space. Suppose that G acts on X by measure-preserving homeomorphisms. We say that G acts on X almost properly discontinuously if there exists a  G-invariant subset $K$ with $\mu(K) = 0$ such that G acts on $X - K$ properly discontinuously.
\end{definition}
\begin{example}\label{example:Schottky}
Let $H \leq \textcolor{black}{PSL(2,\mathbb{Z})}$ be a Schottky group, then its limit set $\Lambda(H) \subseteq S^1$ \textcolor{black}{, as a Cantor set}, has zero Lebesgue measure, and \textcolor{black}{thus it acts on $\{\mathbb{R}^2-(0,0)\}/\{\pm1\}$ almost properly discontinuously}.
\end{example}
\noindent
Although the action of $\M(S)$ on $\MF(S)$ are ergodic with respect to generalized Thurston measures, the action of subgroups of $\M(S)$ on $\MF(S)$ is not always ergodic. The following proposition allows us to use properties of the ~``properly discontinuous" action.
\begin{proposition}\label{proposition:properly discontinuous}
For each complete pair $(\cal{R},\gamma)$, there exists a rank 2 free pseudo-Anosov subgroup $H$ of $\M(S)$ that acts on $\MF(S)$ almost properly discontinuously with respect to the generalized Thurston measure $\mu^{[(\cal{R}, \gamma)]}_{Th}$.
\end{proposition}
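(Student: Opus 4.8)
The plan is to produce a single Schottky subgroup $H=\langle a,b\rangle\cong\mathbb F_{2}$ of $\M(S)$ that works for all complete pairs simultaneously, by a ping-pong construction on Thurston's compact sphere $\PMF(S)$, and then to transfer the resulting dynamics from $\PMF(S)$ up to $\MF(S)$. First I would fix two \emph{independent} pseudo-Anosov classes $f,g\in\M(S)$ — i.e. with disjoint fixed-point pairs $\{[\mathcal F^{\pm}_{f}]\}$ and $\{[\mathcal F^{\pm}_{g}]\}$ in $\PMF(S)$, which exist because $\M(S)$ is not virtually cyclic — choose pairwise disjoint closed neighbourhoods $A^{\pm}\ni[\mathcal F^{\pm}_{f}]$ and $B^{\pm}\ni[\mathcal F^{\pm}_{g}]$, and then invoke Thurston's north--south dynamics for pseudo-Anosov maps on $\PMF(S)$ \cite{Thurston1979} to fix $N$ large enough that $a:=f^{N}$, $b:=g^{N}$ obey the ping-pong inequalities $a^{\pm1}\bigl(\PMF(S)\setminus A^{\mp}\bigr)\subseteq A^{\pm}$ and $b^{\pm1}\bigl(\PMF(S)\setminus B^{\mp}\bigr)\subseteq B^{\pm}$. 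Then $H:=\langle a,b\rangle\cong\mathbb F_{2}$, and the usual argument for such Schottky subgroups of $\M(S)$ shows that every nontrivial element of $H$ acts with north--south dynamics, hence is neither periodic nor reducible, hence pseudo-Anosov; so $H$ is a rank $2$ free pseudo-Anosov subgroup. Its limit set $\Lambda(H)=\bigcap_{n\ge1}\bigcup_{|w|=n}w\bigl(A^{+}\cup A^{-}\cup B^{+}\cup B^{-}\bigr)\subseteq\PMF(S)$ is a Cantor set all of whose points are measured foliations that fill $S$ (a standard feature of Schottky subgroups: $H$ is convex cocompact, and the boundary of a convex cocompact subgroup maps into the space of filling laminations).

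Next I would record the proper-discontinuity property. As for classical Schottky groups on the boundary sphere of hyperbolic space — the situation of Example~\ref{example:Schottky} — the ping-pong partition gives that $H$ acts properly discontinuously on $\Omega:=\PMF(S)\setminus\Lambda(H)$. Let $\pi\colon\MF(S)\setminus\{0\}\to\PMF(S)$ be the radial projection and put $K:=\pi^{-1}(\Lambda(H))\cup\{0\}$, a closed, $H$-invariant (in fact $\mathbb R_{>0}\times H$-invariant) subset of $\MF(S)$. Since the $H$-action on $\MF(S)$ covers the $H$-action on $\PMF(S)$, for every compact $C\subseteq\MF(S)\setminus K=\pi^{-1}(\Omega)$ we have $\{h\in H:hC\cap C\neq\emptyset\}\subseteq\{h\in H:h\pi(C)\cap\pi(C)\neq\emptyset\}$, and the latter is finite because $\pi(C)$ is a compact subset of $\Omega$; hence $H$ acts properly discontinuously on $\MF(S)\setminus K$. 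Everything therefore reduces to proving $\mu(K)=0$ for every generalized Thurston measure $\mu=\mu^{[(\mathcal R,\gamma)]}_{Th}$, and $\mu(\{0\})=0$ is clear, so the point is $\mu\bigl(\pi^{-1}(\Lambda(H))\bigr)=0$.

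This last point is where the work is, and I would split it into two cases. If $(\mathcal R,\gamma)\neq(S,\emptyset)$, then by the description of $\mu$ recalled in Section~\ref{section:measure}, $\mu$-almost every foliation has the form $\mathcal F+\gamma$ with $\mathcal F$ supported on the proper subsurface $\mathcal R$, and any such foliation fails to be filling — it has zero intersection number with some essential simple closed curve (a component of $\gamma$, or a curve disjoint from $\mathcal R$ when $\gamma=\emptyset$); since every point of $\Lambda(H)$ is filling, $\mu\bigl(\pi^{-1}(\Lambda(H))\bigr)=0$. The remaining case $(\mathcal R,\gamma)=(S,\emptyset)$, i.e. $\mu=\mu_{Th}$, is the hard one, because now $\mu_{Th}$-almost every foliation \emph{is} filling. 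For this I would instead show that $\Lambda(H)$ is Lebesgue-null in $\PMF(S)$, which suffices because $\mu_{Th}$ lies in the Lebesgue class in train-track charts and $\pi^{-1}$ of a Lebesgue-null subset of $\PMF(S)$ is null for the cone measure $\mu_{Th}$. To get this I would go back and take $N$ even larger: by Thurston's Perron--Frobenius description of the action of a pseudo-Anosov on $\PMF(S)$ \cite{Thurston1979}, the Lipschitz constant of $f^{N}$ (resp. $g^{N}$) on any fixed compact set avoiding its repelling fixed point tends to $0$ as $N\to\infty$; so, the neighbourhoods $A^{\pm},B^{\pm}$ having been fixed first, one may choose $N$ so large that $\Lambda(H)$ is covered, for every $n$, by at most $4\cdot 3^{\,n-1}$ sets of diameter $\le C\lambda^{n}$ with $3\lambda^{\dim\PMF(S)}<1$. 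Then the $\dim\PMF(S)$-dimensional Hausdorff measure of $\Lambda(H)$ vanishes, so $\Lambda(H)$ is Lebesgue-null and $\mu_{Th}(K)=0$. Taking $N$ this large throughout, the single group $H$ satisfies all the requirements.

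The hard part will be precisely this last step — controlling the Schottky limit set $\Lambda(H)$ finely enough to conclude that it is null for the top-dimensional Thurston measure — which is what forces the quantitative form of north--south dynamics (uniform contraction of high powers of a pseudo-Anosov). The construction of $H$, the verification of proper discontinuity away from $\Lambda(H)$, and the non-Thurston cases of the measure-zero statement are, by comparison, routine.
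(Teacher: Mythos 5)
Your plan has one genuine gap, and it sits exactly at the step you dispatch in a single sentence: the claim that ``the ping-pong partition gives that $H$ acts properly discontinuously on $\PMF(S)\setminus\Lambda(H)$, as for classical Schottky groups.'' Unlike the conformal boundary sphere, the complement of the limit set of a subgroup of $\M(S)$ in $\PMF(S)$ is in general \emph{not} a domain of discontinuity; the correct general statement (McCarthy--Papadopoulos \cite{McCarthy-Papadou89}, which is what the paper uses) is that the action is properly discontinuous only off the zero set $Z(\Lambda(H))=\{[\mathcal G]: i(\mathcal G,\mathcal F)=0 \text{ for some } [\mathcal F]\in\Lambda(H)\}$, which can be strictly larger than $\Lambda(H)$. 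To get $Z(\Lambda(H))=\Lambda(H)$ you need every limit point to be a \emph{uniquely ergodic} filling foliation, not merely filling: a filling, non-uniquely-ergodic limit point has other projective classes at intersection number zero with it, so the zero set would be strictly bigger and your transfer step (a compact subset of $\pi^{-1}(\PMF(S)\setminus\Lambda(H))$ meets only finitely many $H$-translates) is not justified. The convex cocompactness you gesture at would in fact deliver exactly what is needed (Farb--Mosher/Kent--Leininger: limit sets of convex cocompact subgroups consist of uniquely ergodic filling laminations), but you only extract ``filling,'' and convex cocompactness of $\langle f^N,g^N\rangle$ for large $N$ is itself a nontrivial theorem that must be invoked explicitly — it does not follow from the ping-pong on $\PMF(S)$ alone. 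The same upgrade is what makes your measure-zero argument for the non-top-dimensional measures legitimate.

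Two further points, fixable but not free. First, the statement ``the Lipschitz constant of $f^N$ on any compact set avoiding the repelling fixed point tends to $0$'' is not a quotable fact from Thurston's theory as stated: you need an invariant maximal train-track chart at the attracting fixed point with primitive transition matrix, the Perron--Frobenius spectral gap to see the projectivized map as a local contraction in an adapted metric, a funneling/compactness argument to propagate this to the whole compact set, and a bi-Lipschitz comparison with a fixed metric on $\PMF(S)$ (with the ping-pong discs taken inside the linearizing charts); only then does your $4\cdot 3^{n-1}$-cover estimate kill the top-dimensional Hausdorff measure, and hence the Lebesgue class, of $\Lambda(H)$. Second, once these repairs are made, note that your route genuinely differs from the paper's: the paper uses the Fathi--Laudenbach--Po\'enaru construction \cite{Fathi2012} of a free pseudo-Anosov group whose limit set lies in a circle (so the top-dimensional measure vanishes by dimension counting), gets proper discontinuity from the zero set of \cite{McCarthy-Papadou89}, and treats the middle-type measures by a separate properness argument following \cite{Lindenstrauss-Mirzakhani08}; your Hausdorff-measure estimate and the ``limit points fill, the support does not'' shortcut would unify all cases with a single group, but only after the unique-ergodicity/zero-set gap above is closed.
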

\noindent
Any such free group will be called  {\it a $p$-rank 2 free subgroup}.\\

\noindent The first case is when $\cal{R} = \emptyset$ or each component of $\cal{R}$ is $S_{0,3}$, then this proposition is obvious by taking $H$ to be any free pseudo-Anosov subgroup generated by two pseudo-Anosov mapping classes (this works the same for non-integral multicurves as for integral multicurves). For other cases, we prove this proposition through two lemmas.

\begin{lemma}\label{lemma:p-rank}
There exists a \textcolor{black}{$p$-rank 2 free} subgroup $H$ of $\M(S)$ that acts on $\MF(S)$ almost properly discontinuously with respect to the Thurston measure $\mu_{Th}$.
\end{lemma}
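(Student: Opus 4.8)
\noindent
The plan is to construct $H$ by a ping-pong argument and to take the exceptional set to be the cone over its limit set in $\PMF(S)$. Fix two independent pseudo-Anosov mapping classes $\phi,\psi\in\M(S)$, meaning that their four fixed points on $\PMF(S)$ are pairwise distinct, and for $N$ sufficiently large apply the ping-pong lemma to $\phi^{N}$ and $\psi^{N}$. This makes $H=\langle\phi^{N},\psi^{N}\rangle$ a free group of rank $2$ all of whose nontrivial elements are pseudo-Anosov, and moreover convex cocompact in the sense of Farb--Mosher: the orbit map $H\to\T(S)$ is a quasi-isometric embedding and a quasi-convex core for $H$ has compact quotient, hence lies in the $\epsilon$-thick part of $\T(S)$ for some $\epsilon>0$. (Existence and convex cocompactness of such $H$ can also be quoted from Farb--Mosher or Kent--Leininger.)

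\noindent
Let $\Lambda_H\subseteq\PMF(S)$ be the limit set of $H$. By McCarthy--Papadopoulos' analysis of the dynamics of subgroups of $\M(S)$ on Thurston's sphere $\PMF(S)$, the complement $\Omega_H=\PMF(S)\setminus\Lambda_H$ is the domain of discontinuity of $H$. Writing $p\colon\MF(S)\setminus\{0\}\to\PMF(S)$ for the projectivization, I would take $K=p^{-1}(\Lambda_H)\cup\{0\}$, which is a closed, $H$-invariant subset of $\MF(S)$. That $H$ acts properly discontinuously on $\MF(S)\setminus K$ is then formal: if $L\subseteq\MF(S)\setminus K$ is compact then $p(L)\subseteq\Omega_H$ is compact, only finitely many $h\in H$ satisfy $hp(L)\cap p(L)\neq\emptyset$, and $hL\cap L\neq\emptyset$ forces $hp(L)\cap p(L)\neq\emptyset$.

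\noindent
What remains, and what I expect to be the genuine difficulty, is to show $\mu_{Th}(K)=0$. Because $\mu_{Th}$ is a Lebesgue measure in the piecewise-linear charts on $\MF(S)$, it disintegrates along rays, so $\mu_{Th}(K)=0$ will follow once the projectivized Thurston measure $\bar\mu_{Th}$ on $\PMF(S)$ is shown to vanish on $\Lambda_H$. To prove $\bar\mu_{Th}(\Lambda_H)=0$ I would use convex cocompactness: for every $\lambda\in\Lambda_H$ the Teichm\"{u}ller geodesic ray from a fixed basepoint $X_0$ toward $\lambda$ is eventually confined to a bounded neighborhood of the cocompact convex core, hence to a fixed compact subset of the moduli space $\mathcal{M}(S)$; thus $\Lambda_H$ is contained in the set of projective foliations whose forward Teichm\"{u}ller ray from $X_0$ is eventually compactly supported in $\mathcal{M}(S)$. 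Under the Hubbard--Masur identification of $\PMF(S)$ with the unit sphere of quadratic differentials at $X_0$, the measure $\bar\mu_{Th}$ lies in the Masur--Veech measure class, and for that class the condition ``the forward Teichm\"{u}ller trajectory is eventually contained in a fixed compact set'' defines an essentially flow-invariant set, which is null by ergodicity of the Teichm\"{u}ller geodesic flow \cite{Masur1982}. Hence $\bar\mu_{Th}(\Lambda_H)=0$ and $\mu_{Th}(K)=0$. The two points needing care are the precise compatibility of the Thurston and Masur--Veech measure classes under the Hubbard--Masur parametrization, and the transfer of the null statement for the flow on the whole stratum down to the single sphere fibre over $X_0$; both are standard but should be written out.
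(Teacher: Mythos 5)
Your second paragraph contains the one step that, as written, would fail. McCarthy--Papadopoulos do not prove that $\PMF(S)\setminus\Lambda_H$ is a domain of discontinuity for a general subgroup $H$; what they prove is proper discontinuity on the complement of the \emph{zero set} $Z(\Lambda_H)=\{[\mu]\in\PMF(S): i(\mu,\lambda)=0 \text{ for some } [\lambda]\in\Lambda_H\}$, which in general strictly contains $\Lambda_H$, and on $Z(\Lambda_H)\setminus\Lambda_H$ proper discontinuity can genuinely fail. So with $K=p^{-1}(\Lambda_H)\cup\{0\}$ your formal deduction upstairs has no theorem to rest on. The repair is available inside your own construction, but it must be said: since your $H$ is convex cocompact, every limit point is a uniquely ergodic, filling foliation (Kent--Leininger), and for such foliations $i(\mu,\lambda)=0$ forces $[\mu]=[\lambda]$, so $Z(\Lambda_H)=\Lambda_H$ and the McCarthy--Papadopoulos statement you want becomes true; alternatively, enlarge $K$ to $p^{-1}(Z(\Lambda_H))\cup\{0\}$ and observe that the extra points are non-uniquely-ergodic, hence $\mu_{Th}$-null by Masur. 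This is exactly the delicate point the paper's own proof addresses: it removes $P_r^{-1}(Z(\Lambda(H)))$ and argues separately that $Z(\Lambda(H))\setminus\Lambda(H)$ consists of non-uniquely-ergodic foliations and that $\Lambda(H)$ itself is null.

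Once that is fixed, your route is correct in outline but genuinely different from the paper's, and the trade-offs are worth recording. The paper takes the explicit Fathi--Laudenbach--Po\'enaru construction of a pseudo-Anosov free group whose limit set sits inside a circle in $\PMF(S)$, so that $\mu_{Th}(P_r^{-1}(\Lambda(H)))=0$ follows from a low-dimensionality/dimension-counting argument, with $S_{0,4}$ and $S_{1,1}$ handled separately via Schottky subgroups of $PSL(2,\mathbb{Z})$ acting on $S^1$. You instead take a Farb--Mosher Schottky (convex cocompact) group for an arbitrary admissible $S$ and kill the measure of the limit set dynamically: limit directions give Teichm\"uller rays eventually trapped in a fixed compact part of moduli space, and such directions are null by ergodicity of the Teichm\"uller flow. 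This is uniform in $S$ and avoids the special FLP construction, at the price of heavier input: convex cocompactness, ergodicity of the flow, and the two transfer steps you flag yourself, which are real (a null set in $Q^1\mathcal{M}$ need not a priori meet the fixed sphere fibre over $X_0$ in a null set; one needs the product structure of the Masur--Veech measure in terms of Thurston measures, or basepoint-independence of ``eventually thick'' for uniquely ergodic directions, to descend). Note finally that for the downstream use in Proposition \ref{proposition:properly discontinuous} the subgroup must be a rank-2 free pseudo-Anosov group; your $H$ is of this type, so the construction does feed into the rest of the argument.
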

\begin{proof}
If $S = S_{0,4}~~or~~S_{1,1}$, then, in both cases, $\MF(S)$ can be identified with \textcolor{black}{$\{\mathbb{R}^2-{(0,0)}\}/\{\pm1\}$} and $\PMF(S)$ can be identified with $S^1$.  Moreover, there is a finite index subgroup of $\M(S)$ such that the action of this subgroup on $\PMF(S)$ is equivalent to the action of $PSL(2,\mathbb{Z})$ on $S^1$, see [\cite{Farb2012},Chapter 15] for the case of $S_{0,4}$. By taking $H$ to be any subgroup given in Example \ref{example:Schottky} and considering the set $Y = Pr^{-1}(\Lambda(H))$, where $Pr: \MF(S) \rightarrow \PMF(S)$ is the projection, the action of $H$ on $\MF(S)$ is thus almost properly discontinuous and $\mu_{Th}(Y) = 0$.\\
For other $S$, we deduce this lemma by first passing to $\PMF(S)$ and then using the result of \cite{McCarthy-Papadou89} on limit sets. \textcolor{black}{Let $\phi$ and $\psi$ be two independent pseudo-Anosov mapping classes obtained by Thurston's constrcution. By the ping-pong lemma, one can construct a free pseudo-Anosov subgroup $H$ generated by some powers of $\phi$ and $\psi$. As remarked before that stable and unstable measured foliations of pseudo-Anosov elements in $H$ are linear measured foliations and $\mathfrak{L}(S,\sigma)$ is a closed subset, therefore, by Lemma \ref{lemma:null}, the limit set $\Lambda(H)$ of $H$, which is defined to be the closure of the set of fixed points of non-trivial elements of $H$ with respect to the action on $\PMF(S)$, has the property that $$\mu_{TH}(Pr^{-1}(\Lambda(H))) = 0.$$ On the other hand, one can define the zero set $Z(\Lambda(H)) (\subseteq \PMF(S))$ of $\Lambda(H)$ \cite{McCarthy-Papadou89}. By combining with facts [See \cite{McCarthy-Papadou89}, Proposition 6.1] that $Z(\Lambda(H)) - \Lambda(H)$ consists of no uniquely ergodic foliations and uniquely ergodic foliation has full $\mu_{Th}-$measure, we know that $Pr^{-1}(Z(\Lambda(H)))$ has null $\mu_{Th}-$measure. By [\cite{McCarthy-Papadou89},Theorem 7.17], $H$ acts properly discontinuously on $\PMF(S)- Z(\Lambda(H))$, thus properly discontinuously on $\MF(S)-Pr^{-1}(Z(\Lambda(H)))$. Hence $H$ acts almost properly discontinuously on $\MF(S)$.}
\end{proof}

\noindent For $\cal{R} \neq S$, a complete pair  $(\cal{R}, \gamma)$ is called \textit{a middle type} if $\cal{R} \neq \emptyset $ and there is a connected component $\neq S_{0,3}$.
\begin{lemma}
For a complete pair $(R, \gamma)$ of middle type, there exists a \textcolor{black}{$p$-rank 2 free} subgroup $H$ of $\M(S)$ that acts on $\MF(S)$ almost properly discontinuously with respect to the measure $\mu^{[(R, \gamma)]}_{Th}$ .
\end{lemma}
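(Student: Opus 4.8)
The plan is to reuse, for all middle type pairs at once, the rank $2$ free pseudo-Anosov subgroup $H\leq\M(S)$ produced in the proof of Lemma~\ref{lemma:p-rank} (Thurston's construction, \cite{Fathi2012}, Expos\'{e}~13): its limit set $\Lambda(H)$ lies in a circle $\mathcal{C}\subseteq\PMF(S)$ and consists of pseudo-Anosov foliations and enlarged multicurves, $H$ acts properly discontinuously on $\PMF(S)\setminus Z(\Lambda(H))$ for the zero set $Z(\Lambda(H))$ of \cite{McCarthy-Papadou89}, and $Z(\Lambda(H))\setminus\Lambda(H)$ contains no uniquely ergodic foliation. The point is that the measure $\mu^{[(\mathcal{R},\gamma)]}_{Th}$ of a middle type pair is carried by a set so thin that, in contrast with Lemma~\ref{lemma:p-rank}, no dimension count is needed to see that $Z(\Lambda(H))$ is null for it.

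First I would observe that for a middle type pair $\gamma\neq\emptyset$: if $\gamma=\emptyset$ the only component obtained by cutting is $S$ itself, forcing $\mathcal{R}\in\{\emptyset,S\}$, both excluded. Hence $\mu^{[(\mathcal{R},\gamma)]}_{Th}$ is supported on $\M(S)\cdot M(\mathcal{R},\gamma)$, and for every $\mathcal{F}+\gamma\in M(\mathcal{R},\gamma)$ and every component $\gamma_i$ of $\gamma$ one has $i(\mathcal{F}+\gamma,\gamma_i)=0$ (the support of $\mathcal{F}$ lies in $\mathcal{R}$, which is disjoint from $\gamma_i$, and distinct components of a multicurve are disjoint). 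Being disjoint from a fixed essential simple closed curve is $\M(S)$-invariant, so no foliation in $\mathrm{supp}\,\mu^{[(\mathcal{R},\gamma)]}_{Th}$ fills $S$; moreover a weighted multicurve lies in $\mathrm{supp}\,\mu^{[(\mathcal{R},\gamma)]}_{Th}$ only if it equals $f(\mathcal{F}+\gamma)$ with $\mathcal{F}$ itself a weighted multicurve on $\mathcal{R}$, which occurs only on a $\mu^{[(\mathcal{R},\gamma)]}_{Th}$-null set, the rational locus of $\mathcal{M}(\mathcal{R})$ being Thurston-null in each factor.

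Next, set $K=P_r^{-1}(Z(\Lambda(H)))\subseteq\MF(S)$, where $P_r\colon\MF(S)\to\PMF(S)$ is the projection; since $Z(\Lambda(H))$ is $H$-invariant, so is $K$. Because $Z(\Lambda(H))$ consists of $\Lambda(H)$ together with the non-uniquely ergodic foliations it determines, and $\Lambda(H)$ consists of pseudo-Anosov foliations and enlarged multicurves, each element of $Z(\Lambda(H))$ either fills $S$ (a pseudo-Anosov foliation or a non-uniquely ergodic companion of one) or is a weighted multicurve; by the previous paragraph each possibility meets $\mathrm{supp}\,\mu^{[(\mathcal{R},\gamma)]}_{Th}$ in a null set, so $\mu^{[(\mathcal{R},\gamma)]}_{Th}(K)=0$. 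Finally, given a compact $C\subseteq\MF(S)\setminus K$, its image $P_r(C)$ is a compact subset of $\PMF(S)\setminus Z(\Lambda(H))$, so $\{h\in H:hP_r(C)\cap P_r(C)\neq\emptyset\}$ is finite; since $P_r$ is $\M(S)$-equivariant, $hC\cap C\neq\emptyset$ forces $hP_r(C)\cap P_r(C)\neq\emptyset$, whence $\{h\in H:hC\cap C\neq\emptyset\}$ is finite. Thus $H$ acts on $\MF(S)$ almost properly discontinuously with respect to $\mu^{[(\mathcal{R},\gamma)]}_{Th}$.

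The main obstacle I anticipate is the bookkeeping around the zero set: one must verify carefully that the non-filling part of $Z(\Lambda(H))$ reduces to weighted multicurves subordinate to $\Lambda(H)\cap\mathcal{C}$, and that these, together with the non-uniquely ergodic filling measures, meet the support of this particular generalized Thurston measure only in a null set — this is precisely where the structure of $\Lambda(H)$ from \cite{Fathi2012} and the zero-set calculus of \cite{McCarthy-Papadou89} are genuinely used. I also note that the tempting shortcut of taking $H$ inside a single component $S_1$ of $\mathcal{R}$ and invoking Lemma~\ref{lemma:p-rank} on $S_1$ fails: any foliation supported in the complement of $\mathcal{R}$ is fixed by all of $\M(S_1)$, and such foliations can form a set of positive $\mu^{[(\mathcal{R},\gamma)]}_{Th}$-measure — for instance on $S_{2,0}$ with $\gamma$ a separating curve and $\mathcal{R}$ one of the two one-holed tori — so the acting group must genuinely be a pseudo-Anosov subgroup of $\M(S)$.
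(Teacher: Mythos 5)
There is a genuine gap at the crucial step $\mu^{[(\mathcal{R},\gamma)]}_{Th}(K)=0$ for $K=P_r^{-1}(Z(\Lambda(H)))$. Your dichotomy ``each element of $Z(\Lambda(H))$ either fills $S$ or is a weighted multicurve'' does not follow from the structure of $\Lambda(H)$ you quote. The limit set produced by the construction of \cite{Fathi2012} is allowed to contain enlargements of multicurves, and as a point of $\PMF(S)$ an enlarged multicurve has the same intersection numbers as the underlying weighted multicurve $\alpha$; hence its contribution to the zero set is $\{F: i(F,\alpha)=0\}$, which contains \emph{every} measured foliation supported in the complement of $\alpha$ --- generically neither filling nor a multicurve. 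Concretely, whenever there is $g\in\M(S)$ with $\alpha$ disjoint from $g(\mathcal{R}\cup\gamma)$ (which happens for many middle type pairs, e.g.\ when a component of $\alpha$ lies in the $\M(S)$--orbit of a component of $\gamma$ or in a non-pants complementary piece), the entire translate $g.M(\mathcal{R},\gamma)$ lies in $Z(\Lambda(H))$, and such a translate has positive $\mu^{[(\mathcal{R},\gamma)]}_{Th}$-measure. So either you must prove that your specific $H$ has $\Lambda(H)$ consisting only of filling foliations (which you do not do, and which is exactly the delicate point your ``no dimension count is needed'' remark skips), or the nullity claim fails. Note also that the dimension-counting escape used in Lemma~\ref{lemma:p-rank} is unavailable here precisely because, as you yourself observe, the support of a middle type measure consists of non-filling, non-uniquely-ergodic foliations.

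For comparison, the paper avoids the limit-set machinery entirely in the middle type case: the null set is simply $Y=\MF(S)-\bigcup_{g\in\M(S)}g.M(\mathcal{R},\gamma)$, which is null because the measure is supported on the orbit of $M(\mathcal{R},\gamma)$, and proper discontinuity on the complement is proved \`a la \cite{Lindenstrauss-Mirzakhani08}: every point of a compact set $K$ in the orbit has multicurve part of $\ell_X$-length in a fixed interval $[a,b]$, properness of $\ell_X$ on the discrete set $\M(S).\gamma$ gives a finite set $J$ with $\{h\in H: hK\cap K\neq\emptyset\}\subseteq\{h\in H: hJ\cap J\neq\emptyset\}$, and the latter is finite by the discrete ($\mathcal{R}=\emptyset$) case, valid for \emph{any} free pseudo-Anosov subgroup $H$. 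If you want to salvage your route, you would need to establish the stronger statement about $\Lambda(H)$ (no non-filling limit points, or at least that their zero sets meet the orbit of $M(\mathcal{R},\gamma)$ in a null set); otherwise the argument via bounded length of the multicurve part is both simpler and actually closes the case.
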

\begin{proof}
 We will follow the idea of [\cite{Lindenstrauss-Mirzakhani08}, Lemma 3.1] to prove this lemma. Fix any hyperbolic structure $X$ on $S$ and consider the continuous function $\ell_{X} : \MF(S) \rightarrow R_{+}$ extending the geodesic length function. Thus $$\MF(S) = \lim_{L_1 \rightarrow 0, L_2 \rightarrow \infty }B^{L_1}_{L_2}(X),$$ where $B^{L_1}_{L_2}(X) = \{\nu \in \MF(S): \ell_{X}(\nu) \in [L_1,L_2]\}$ is a compact set and, as pointed out in the proof of [\cite{Lindenstrauss-Mirzakhani08}, Lemma 3.1], $B^{L_1}_{L_2}(X) \bigcap (\bigcup_{g \in \M(S)} g.M(\cal{R}, \gamma))$ is equal to $B^{L_1}_{L_2}(X) \bigcap (\bigcup _ {i = 1}^{ n} g_{i}.M(\cal{R}, \gamma)),$ for some finite set $\{g_1, ..., g_n\} \subset \M(S)$. Fix a free pseudo-Anosov subgroup $H$ of $\M(S)$ and take any compact subset $K \subseteq \bigcup _{g \in \M(S)} g.M(\mathcal{R}, \gamma)$. Taking $L_1$ small enough and $L_2$ large enough, one can assume $K \subseteq B^{L_1}_{L_2}(X)$. We now claim that $$|\{ h \in H: h.K \bigcap K \neq \emptyset\}| < \infty.$$ Let $Z = \M(S).\gamma$ and $\ell_{X}: Z \rightarrow R_{+}$. We first claim that there is a finite set $J \subseteq Z$ such that $$\{h \in H: h.K \bigcap K \neq \emptyset\} \subseteq \{h \in H: h.J \bigcap J \neq \emptyset\}.$$ For every element in $K$ can be written as $\gamma + \nu$ such that $\ell_{X}(\gamma)$ is bounded. If $h.K \bigcap K \neq \emptyset$, then $h(\gamma)$ also has bounded $\ell_{X}-$length and all bounds can be chosen to be uniform on $K$, say $[a,b]$. Since $\ell_{X}$ is a proper map on $Z$ (that is, the inverse of compact set is also compact), $J = \ell_{X}^{-1}([a,b])$ is then a finite subset of $Z$ containing both $h(\gamma)$ and $\gamma$. So one has $\{h \in H: h.K \bigcap K \neq \emptyset\} \subseteq \{h \in H: h.J \bigcap J \neq \emptyset\}$. By the discussion of the case $\cal{R} = \emptyset$, the set $\{h \in H: h.J \bigcap J \neq \emptyset\}$ is finite which implies that the finiteness of $|\{ h \in H: h.K \bigcap K \neq \emptyset\}|$. Now taking the measure zero set to be $Y = \MF(S) - \bigcup_{g \in \M(S)} g.M(\cal{R}, \gamma)$ completes the proof.
\end{proof}

\bigskip
\noindent
{\bf $H-$related cover.}~~~
\textcolor{black}{Given a group $H$ and a Borel space $(X,\mu)$. Suppose that $H$ acts on $X$ almost properly discontinuously and freely. Examples for such $(H,X,\mu)$ are given by Proposition \ref{proposition:properly discontinuous}. By definition of almost properly discontinuous action, there is a null set $Y$ such that $H$ acts on $X-Y$ properly discontinuously. For any compact subset $K$ of $X - Y$}, we will describe a ~``nice" cover of $K$. Since $X - Y$ is the domain of discontinuity of $H$, for every $p$ in $K$, there is an open neighbourhood $\mathcal{U}_p$ of $p$ in $X-Y$ with finite $\mu-$measure such that for all $h \in H$, one has $ h.\mathcal{U}_p \bigcap \mathcal{U}_p = \emptyset$. Thus there is an open cover of $K$. By compactness of $K$, choose a finite sub-cover of this cover. Label the sub-cover by $\mathcal{U}_1,...,\mathcal{U}_n$ and for each $i \in {1,...,n}$, consider $ A_i = \{h.\mathcal{U}_i| h \in H\}$. Starting from $i=1$, form a family $B_1 = \{X_k \in A_1| X_k \bigcap K \neq \emptyset\}$ as well as $C_1 = \{Y_k|Y_k=X_k \bigcap K,X_k \in B_1\}$. Delete $\bigcup_{Y_k \in C_1} X_k$ from $K$ and denote the resulting compact set by $K_1$. Then for $K_1$, there is a family $B_2 = \{X_k \in A_2| X_k \bigcap K_1 \neq \emptyset\}$ as well as $C_2 = \{Y_k|Y_k=X_k \bigcap K_1,X_k \in B_2\}$. Delete $\bigcup_{Y_k \in C_2} X_k$ from $K_2$ and denote the resulting compact set by $K_3$. Continuing this process, there is a cover of K which can be written in the following formula: $$K \subseteq \bigsqcup_{k=1}^n\bigsqcup_{Y_i \in C_k}Y_i.$$ So $K$ can be covered by finite many pairwise disjoint $\mu-$measurable sets \textcolor{black}{(we allow some of them to be null sets)}. This will be called  an {\textit{$H-$related cover of K}}, since, for each $k$, $C_k$ is a family of disjoint sets that lie inside the $H-$orbit of some set.

\section{Nonexistence of almost invariant vectors}\label{section:4}

\bigskip
\noindent
Let $\K(\mu) = L^2(\MF(S), \mu)$, where $\mu = \mu^{[(\cal{R}, \gamma)]}_{Th}$ is a generalized Thurston measure explained in Section 3.1.1, and $\pi^{\mu}$ be the associated unitary representation of $\M(S)$. The main result of this section is the following:
\begin{theorem}\label{main:theorem}
 For a compact surface $S = S_{g,n}$ with $3g + n \geq 4$ and each generalized Thurston measure $\mu$, the associated representation $\pi^{\mu}$ of $\M(S)$ does not have almost invariant vectors.
\end{theorem}

\noindent
By using Theorem \ref{almostinv:MV}, Remark \ref{remark1} and Remark \ref{remark2}, we have:
\begin{corollary}\label{main:coro2}
 Let $S = S_{g,n}$ be a compact surface with $3g + n \geq 4$ and $\mu$ be a generalized Thurston measure, then $H^1(\M(S),\pi^{\mu}) = \overline{H^1}(\M(S),\pi^{\mu})$, where $\pi^{\mu}$ is the associated representation of $\M(S)$.
\end{corollary}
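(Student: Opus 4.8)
The plan is to obtain the statement as an immediate application of Guichardet's theorem (Theorem~\ref{almostinv:MV}) to the pair $(\K(\mu),\pi^{\mu})$, once its two standing hypotheses---finite generation of the group and absence of nonzero invariant vectors---have been checked. Recall that Theorem~\ref{almostinv:MV} asserts precisely that, under these hypotheses, the equality $H^1 = \overline{H^1}$ holds if and only if the representation has no almost invariant vectors; and this last property is exactly the conclusion of Theorem~\ref{main:theorem}.

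First I would note that $\M(S)$ is finitely generated for every compact surface $S$: this is classical, via the Dehn--Lickorish theorem that finitely many Dehn twists generate, supplemented by finitely many further mapping classes when $n \geq 1$. So the finite-generation hypothesis of Theorem~\ref{almostinv:MV} is met.

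Next I would verify that $\pi^{\mu}$ has no nonzero invariant vectors, splitting into cases according to the type of $\mu$. If $\mu$ is one of the discrete measures (including the case in which every connected component of $\mathcal{R}$ is $S_{0,3}$), this is Remark~\ref{remark1}: the associated representation is, up to $\M(S)$-equivariant isomorphism, either one of the irreducible representations of Paris or a direct sum of such, none of which is the trivial representation, so there is no invariant vector. If $\mu$ is non-discrete, i.e. $\mathcal{R}$ is nontrivial, this is Remark~\ref{remark2}: the $\M(S)$-action on $(\MF(S),\mu)$ is ergodic, so an invariant element of $L^2(\MF(S),\mu)$ is $\mu$-almost everywhere constant, and since $\mu(\MF(S)) = \infty$ a nonzero constant is not square-integrable; hence the invariant vector is $0$.

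Finally, invoking Theorem~\ref{main:theorem}---that $\pi^{\mu}$ has no almost invariant vectors---and applying Theorem~\ref{almostinv:MV} with $G = \M(S)$ and $(V,\pi) = (\K(\mu),\pi^{\mu})$ yields $H^1(\M(S),\pi^{\mu}) = \overline{H^1}(\M(S),\pi^{\mu})$, as desired. The only substantive ingredient is Theorem~\ref{main:theorem} itself; the corollary is otherwise a formal bookkeeping of hypotheses, so the only point requiring care is to confirm that Remarks~\ref{remark1} and~\ref{remark2} together cover all generalized Thurston measures, which they do.
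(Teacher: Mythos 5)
Your proof is correct and follows exactly the paper's route: the paper deduces the corollary from Guichardet's criterion (Theorem~\ref{almostinv:MV}) together with Remarks~\ref{remark1} and~\ref{remark2} (no nonzero invariant vectors) and Theorem~\ref{main:theorem} (no almost invariant vectors). Your write-up merely makes explicit the finite-generation hypothesis and the case check, which the paper leaves implicit.
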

\begin{proof}
By Theorem \ref{almostinv:MV}, we only need to show that the representation $\pi^{\mu}$ has no nonzero invariant vectors. The corollary is thus concluded by using Remark \ref{remark1} for discrete measures and Remark \ref{remark2} for non-discrete measures.
\end{proof}
\bigskip
\noindent
Let $\gamma$ be the isotopy class of an essential simple closed curve on $S$, $X = \M(S).\gamma$ and $\mathcal{X}$ be a point in the Teichm\"{u}ller space $\T(S)$ of $S$. Denoting $\Delta^{\phi_i}_{\mathcal{X}}(\alpha) = \ell_{\mathcal{X}}(\alpha)-\ell_{\phi_i.\mathcal{X}}(\alpha) $, where $\alpha \in X$, and using the description of $\ell^2(X)$ via $\T(S)$ in Section \ref{section:des}, the following inequality is easy to show:
\begin{corollary}\label{main:coro}
Let $S = S_{g,n}$ be a compact surface with $3g + n \geq 4$ and $\gamma$ be the isotopy class of an essential simple closed curve on $S$. Then there exists a finite subset $\{\phi_1,...,\phi_n\}$ of $\M(S)$ consisting of pseudo-Anosov mapping classes and a constant $ \epsilon > 0 $, such that, for every point $\mathcal{X}$ in $\T(S)$, we have:
$$\max_{i \in \{1,2,...,n\}}\textcolor{black}{\left\{\sum_{\alpha \in \M(S).\gamma}e^{-2\ell_{\mathcal{X}}(\alpha)}(e^{\Delta^{\phi_i}_{\mathcal{X}}(\alpha)}-1)^2\right\}} \geq \epsilon \sum_{\alpha \in \M(S).\gamma} e^{-2\ell_{\mathcal{X}}(\alpha)}.$$
\end{corollary}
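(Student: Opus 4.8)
The plan is to derive Corollary \ref{main:coro} directly from the main theorem (Theorem \ref{main:theorem}) applied to the discrete measure $\mu_\gamma$ supported on $X = \M(S).\gamma$, using the second description of $\ell^2(X)$ via the Teichm\"uller space given in Section \ref{section:des}. Recall that for $\mathcal{X}\in\T(S)$ we have the vector $f_{\mathcal{X}}\in\ell^2(X)$ with $f_{\mathcal{X}}(\alpha)=e^{-\ell_{\mathcal{X}}(\alpha)}$, and that $\M(S)$ acts on $\ell^2(X)$ by $(\pi^{\mu_\gamma}(\phi)f)(\alpha)=f(\phi^{-1}.\alpha)$. The key computation is that the action of a mapping class $\phi$ on $f_{\mathcal{X}}$ is again of the same form: $\pi^{\mu_\gamma}(\phi)f_{\mathcal{X}} = f_{\phi.\mathcal{X}}$, because $\ell_{\mathcal{X}}(\phi^{-1}.\alpha) = \ell_{\phi.\mathcal{X}}(\alpha)$ by naturality of the geodesic length function under the mapping class group action on Teichm\"uller space. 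Granting this, $\pi^{\mu_\gamma}(\phi)f_{\mathcal{X}}-f_{\mathcal{X}}$ evaluated at $\alpha$ is $e^{-\ell_{\phi.\mathcal{X}}(\alpha)}-e^{-\ell_{\mathcal{X}}(\alpha)} = e^{-\ell_{\mathcal{X}}(\alpha)}(e^{\Delta^{\phi}_{\mathcal{X}}(\alpha)}-1)$, so that $\|\pi^{\mu_\gamma}(\phi)f_{\mathcal{X}}-f_{\mathcal{X}}\|^2 = \sum_{\alpha\in X} e^{-2\ell_{\mathcal{X}}(\alpha)}(e^{\Delta^{\phi}_{\mathcal{X}}(\alpha)}-1)^2$, while $\|f_{\mathcal{X}}\|^2 = \sum_{\alpha\in X} e^{-2\ell_{\mathcal{X}}(\alpha)}$; Proposition \ref{prop: mcshine} guarantees both sums are finite.

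Next I would invoke Theorem \ref{main:theorem}: since $\pi^{\mu_\gamma}$ does not have almost invariant vectors, unwinding the definition, there is a finite set $K=\{\phi_1,\dots,\phi_n\}\subseteq\M(S)$ and an $\epsilon>0$ such that $\max_{i}\|\pi^{\mu_\gamma}(\phi_i)v-v\| \geq \sqrt{\epsilon}\,\|v\|$ for all $v\in\ell^2(X)$ — in particular for $v=f_{\mathcal{X}}$, which is nonzero. Squaring and substituting the two identities from the previous paragraph yields exactly
$$\max_i\Big(\sum_{\alpha \in \M(S).\gamma}e^{-2\ell_{\mathcal{X}}(\alpha)}(e^{\Delta^{\phi_i}_{\mathcal{X}}(\alpha)}-1)^2\Big) \geq \epsilon \sum_{\alpha \in \M(S).\gamma} e^{-2\ell_{\mathcal{X}}(\alpha)}$$
for every $\mathcal{X}\in\T(S)$, which is the desired inequality. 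The only remaining cosmetic point is that the corollary asks for the $\phi_i$ to be pseudo-Anosov: the proof of Theorem \ref{main:theorem} for the discrete measure $\mu_\gamma$ proceeds by restricting (via Lemma \ref{almostinv:subgroup}) to a free pseudo-Anosov subgroup $H$ of rank $2$ and transporting the left regular representation of $\mathbb{F}_2$, so the finite set $K$ obstructing almost invariant vectors can be taken inside such an $H$ and hence consists of pseudo-Anosov elements; I would state this and point to Section \ref{section:4}.

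I do not anticipate a genuine obstacle here — the content of the corollary is entirely in Theorem \ref{main:theorem}, and this is a matter of rewriting. The one step that requires a little care is the identity $\pi^{\mu_\gamma}(\phi)f_{\mathcal{X}}=f_{\phi.\mathcal{X}}$, i.e. checking that the geometric action of $\M(S)$ on the family $\{f_{\mathcal{X}}\}$ matches the unitary action on $\ell^2(X)$; this hinges on the correct bookkeeping of the $\M(S)$-action on $\T(S)$ versus on $X$ and the invariance $\ell_{\phi.\mathcal{X}}(\phi.\alpha) = \ell_{\mathcal{X}}(\alpha)$, but it is standard. Finally I would remark that since the left-hand side is a maximum over a fixed finite set and $\epsilon$ is uniform in $\mathcal{X}$, the inequality is genuinely a statement about the $\M(S)$-action on $\T(S)$ and does not follow from a naive pointwise estimate, which is why it is recorded as a corollary of independent interest.
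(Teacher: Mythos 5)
Your proposal is correct and is essentially the paper's own argument: the paper deduces the corollary by combining Theorem \ref{main:theorem} (equivalently Lemma \ref{lemma:discrete}, where the finite set can be taken inside a rank $2$ free pseudo-Anosov subgroup) with the Teichm\"uller-space description $f_{\mathcal{X}}(\alpha)=e^{-\ell_{\mathcal{X}}(\alpha)}$ of vectors in $\ell^2(\M(S).\gamma)$ from Section \ref{section:des}, exactly as you do. Your write-up just makes explicit the equivariance $\pi(\phi)f_{\mathcal{X}}=f_{\phi.\mathcal{X}}$ and the norm computations that the paper leaves as ``easy to show.''
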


\bigskip
\noindent
\textcolor{black}{We divide the proof of Theorem \ref{main:theorem} into two lemmas}. First we prove a lemma used for discrete measures.
\begin{lemma}\label{lemma:discrete}
Let $G$ be a discrete countable group and $X$ be a discrete set equipped with a $G-$action. Suppose that there is a rank 2 free subgroup $H$ of $G$ such that $H$ acts on $X$ freely. Then the unitary representation $\pi = \ell^2(X)$ of $G$ associated to the action of $G$ on $X$ does not have almost invariant vectors.
\end{lemma}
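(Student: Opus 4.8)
The plan is to reduce the statement to the non-amenability of $\mathbb{F}_2$ via Theorem \ref{Eym} (or rather Remark \ref{Remark:Freegroup}). By Lemma \ref{almostinv:subgroup}, it suffices to show that the restricted representation $(\pi|_H, \ell^2(X))$ of the rank $2$ free group $H$ does not have almost invariant vectors. Since $H$ acts on $X$ with all point stabilizers trivial, each $H$-orbit $H.x \subseteq X$ is in $H$-equivariant bijection with $H$ itself, so the subspace of $\ell^2(X)$ spanned by functions supported on that orbit is isomorphic, as an $H$-representation, to the left regular representation $\ell^2(H) \cong \ell^2(\mathbb{F}_2)$. Decomposing $X$ into the disjoint union of its $H$-orbits $X = \bigsqcup_{j} H.x_j$ gives an orthogonal $H$-invariant decomposition $\ell^2(X) = \bigoplus_{j} \ell^2(H.x_j)$, i.e. $\pi|_H$ is a (possibly infinite) direct sum of copies of the left regular representation $\lambda_H$ of $\mathbb{F}_2$.

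The remaining point is that a direct sum $\bigoplus_j \lambda_H$ of copies of a representation without almost invariant vectors still has no almost invariant vectors. I would argue this directly: fix the finite set $K \subseteq H$ and $\epsilon > 0$ furnished by the fact that $\lambda_H$ has no almost invariant vectors (Remark \ref{Remark:Freegroup}), so that $\max_{g \in K}\|\lambda_H(g)w - w\| \geq \epsilon\|w\|$ for every $w \in \ell^2(H)$. Given any $v = (v_j)_j \in \bigoplus_j \ell^2(H.x_j)$, apply this bound coordinatewise and use that the $H$-action preserves each summand together with the Pythagorean identity $\|\pi(g)v - v\|^2 = \sum_j \|\lambda_H(g)v_j - v_j\|^2$. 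A short computation then yields $\max_{g\in K}\|\pi(g)v - v\|^2 \geq \tfrac{1}{|K|}\sum_{g \in K}\|\pi(g)v-v\|^2 \geq \tfrac{\epsilon^2}{|K|}\|v\|^2$, so $(K, \epsilon/\sqrt{|K|})$ witnesses that $\pi|_H$ has no almost invariant vectors. (One can also invoke Lemma \ref{finiteness} after first passing to the dense $H$-invariant subspace of finitely supported functions, but the coordinatewise estimate is cleaner.)

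I do not expect any serious obstacle here: the two inputs — the orbit-by-orbit identification with $\ell^2(\mathbb{F}_2)$, which uses triviality of the stabilizers in an essential way, and stability of ``no almost invariant vectors'' under orthogonal direct sums — are both routine. The only mild care needed is keeping track of the fact that the witness pair $(K,\epsilon)$ for a direct sum must be adjusted (the constant degrades by a factor depending on $|K|$), but this is harmless since the definition only requires \emph{existence} of such a pair. The genuinely substantive work of the paper lies elsewhere, namely in verifying the hypothesis of this lemma for the non-discrete Thurston measures, which is exactly the content of Proposition \ref{proposition:properly discontinuous} and the $H$-related cover construction of Section \ref{section:3}.
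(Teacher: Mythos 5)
Your proposal is correct and follows essentially the same route as the paper: triviality of the stabilizers identifies each $H$-orbit (the paper's ``2-trees'') with $\ell^2(\mathbb{F}_2)$ as an $H$-representation, and the non-amenability of $\mathbb{F}_2$ does the rest. The only difference is cosmetic: where the paper passes to the dense subspace of finitely supported functions and runs a pigeonhole/contradiction argument grouping the orbit pieces according to which element of $K$ witnesses the failure, you average over $K$ directly on the full orthogonal sum, reaching the same conclusion (with constant $\epsilon^2/|K|$ in place of the paper's $\epsilon/\sharp K$) a bit more cleanly.
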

\begin{remark}
This lemma is well-known, we give an elementary proof here mainly for heuristic purposes.
\end{remark}
\begin{definition}
Let $H$ be a rank 2 free group and $X$ be a space that $H$ acts. Suppose $x \in X$ such that the stabilizer $Stab_{H}(x)$ of $x$ is trivial. The image of $H$ under the orbit map $H \rightarrow X, h \mapsto h.x$ is called the 2-tree based at $x$ (with respect to $(H, X)$).
\end{definition}
\begin{proof}[Proof of Lemma \ref{lemma:discrete}]
By Lemma \ref{almostinv:subgroup}, we can pass to subgroups. For the action of the group $H$ on the space $X$ and any point $p \in X $, consider the 2-tree based at $p$ with respect to $(H, X)$.\\
\noindent
Let $W$ be the subspace of $\ell^2(X)$ consisting of functions with finite support. As $W$ is $G-$invariant and dense, by Lemma \ref{finiteness}, it is enough to show that $(\pi|_{W}, W)$ does not have almost invariant vectors. That is, we have to find $(K, \epsilon)$ with the property that $$\max_{g \in K}\|\pi(g)f-f\|^2 \geq \epsilon\|f\|^2,~\text{for all}~f\in W.$$  Since $H \cong \mathbb{F}_2$, as mentioned in Remark \ref{Remark:Freegroup}, the left regular representation $\ell^2(H)$ does not have almost invariant vectors, thus such a pair $(K, \epsilon)$ exists for the regular representation. Fix such pair $(K, \epsilon)$ for the rest of the proof. Here are two facts.\\
 \noindent
 \underline{\textbf{Facts:}}\\
 \noindent
 1. For every 2-tree $\mathbb{T}$ based at a point, $\ell^2(\mathbb{T}) $ is $H-$equivariantly isomorphic to $\ell^2(H)$.\\
 2. Different 2-trees are disjoint and thus, if the support $A_1$ of $f_1 \in \ell^2(X)$ and the support $A_2$ of $f_2 \in \ell^2(X)$ are located in different 2-trees, then $f_1$ and  $f_2$ are orthogonal.

 \medskip
 \noindent
These two facts imply that we only need to deal with $\ell^2-$functions on $X$ whose finite support contained in a single 2-tree. In fact, for every $f \in W$, if we decompose its support $K_f$ as $$K_f = \bigsqcup^{n}_{i=1} K_{f_i},$$ where $K_{f_i}$ lie in different 2-trees and $f_i$ is defined to be the restriction of $f$ on such different 2-trees, then $$f = \sum^{n}_{i = 1}f_i,$$ $$ \|\pi(g)f-f\|^2 = \sum^{n}_{i = 1} \|\pi(g)f_i-f_i\|^2,~\text{for all}~g \in K.$$ Note that $K \subseteq H$ is fixed. If the support of $f_i$ is contained in a 2-tree $\mathbb{T}_i$, by Remark \ref{Remark:Freegroup}, there exists $g_i \in K$ such that$$ \|\pi(g_i)f_i-f_i\|^2 \geq \epsilon\|f_i\|^2.$$ Now for every $f_i$, let $g_i$ be an element satisfying the above inequality. If two 2-trees $f_i,f_j$ correspond to the same $g_i = g_j$, then $f_i + f_j$ also satisfies that inequality. As $K$ is finite, denote $\sharp{K} = m$ and so $f$ can be further decomposed, that is, $f = f'_1 + f'_2 + \cdots +f'_s (s \leq m)$ such that $f'_k = \sum_{j} f_{jk},$ where $f_{jk} \in \{f_1,...,f_n\}$ and $\{f_{jk}\}_j$ correspond to the same $g_k \in K$. We claim that there exists $g_l \in K$ such that $$ \|\pi(g_l)f-f\|^2 \geq \frac{\epsilon}{s}\|f\|^2 \geq \frac{\epsilon}{m}\|f\|^2.$$ Otherwise, since for all $g_i$ selected, we have
 \begin{equation}\label{proof:equation}
 \|\pi(g_i)f-f\|^2 \geq \|\pi(g_i)f_i-f_i\|^2 \geq \epsilon\|f_i\|^2,
 \end{equation}
 then
 \begin{displaymath}
 \begin{aligned}
  &\epsilon\|f\|^2 = \sum_{i=1}^{m}\frac{\epsilon}{m}\|f\|^2 > \sum_{i=1}^{m}\|\pi(g_i)f-f\|^2 \\
  &\phantom{=\;\;}\geq \sum_{i= 1}^{s}\|\pi(g_i)f-f\|^2 \geq \sum_{i= 1}^{s}\epsilon\|f_i\|^2 =  \epsilon\|f\|^2.
 \end{aligned}
 \end{displaymath}
 The second inequality is the assumption and the last inequality is inequality (\ref{proof:equation}). Thus there exists a pair $(K, \eta = \frac{\epsilon}{\sharp K})$ such that $$\max_{g \in K}\|\pi(g)f-f\|^2 \geq \eta \|f\|^2,~\text{for all}~f \in W.$$ So the proof of  the lemma is completed.
\end{proof}
\bigskip
\noindent
Then we prove a lemma used for non-discrete measures.
\begin{lemma}\label{lemma:nondiscrete}
Let $G$ be a discrete countable group and $(X,\mu)$ be a Borel space. Suppose that $G$ acts on $X$ by measure-preserving homeomorphisms. If there exists a rank 2 free subgroup $H$ of $G$ such that $H$ acts on $X$ almost properly discontinuously and freely, then the unitary representation $\pi = L^2(X,\mu)$ of $G$ associated to the action of $G$ on $X$ does not have almost invariant vectors.
\end{lemma}
\begin{proof}[Proof of Lemma \ref{lemma:nondiscrete}]
 Also by Lemma \ref{almostinv:subgroup}, we can pass to subgroups. Fix a null subset $Y$ of $X$ such that $H$ acts on $X-Y$ properly discontinuously. For any point $p \in X $, consider the image of $H$ under the orbit map, given by $$h \longmapsto h.p.$$ Since the stabilizer $Stab_{p}(H)$ is trivial, this map is injective. This is the 2-tree based at $p$ with respect to $(H, X)$. Define $W$ to be the $G-$invariant subspace of $L^2(X,\mu)$ consisting functions $f \in L^2(X,\mu)$ that compactly supported on $X - Y$. Thus $\overline{W} = L^2(X,\mu)$ as $\mu$ is a Radon measure. So as before, we only need to prove the theorem in the case of $(W, \pi|_{W})$. For each $f \in W$ supported on one $H-$orbit \textcolor{black}{of a measurable set $U$}, that is, $$K_f \subseteq \bigsqcup_{h \in H} h.U,$$ where $K_f$ is the compact support of $f$ and the union is disjoint indexed by $H$, fix a point $p$ in $U$ and associate an element $A_f \in \ell^2(\mathbb{T})$, where $\mathbb{T}$ is the 2-tree based on $p$, via $$ A_f(h.p) = \left(\int_{h.U}|f|^2d\mu\right)^{\frac{1}{2}}.$$  Define $$K' = \left\{g \in H |~~ g ~~\text{or}~~ g^{-1} \in K\right\},$$ where $K$ is the same finite subset of H as in Lemma \ref{lemma:discrete}. For $f$, one has:
\begin{displaymath}
\begin{aligned}
&\int_{K_f}|\pi(g)f-f|^2d\mu \\
&\phantom{=\;\;}=\sum_{h \in H}\int_{h.U}|\pi(g)f-f|^2d\mu \\
&\phantom{=\;\;}\geq \sum_{h \in H}\left|\left(\int_{h.U}|\pi(g)f|^2d\mu\right)^{\frac{1}{2}}- \left(\int_{h.U}|f|^2d\mu\right)^{\frac{1}{2}}\right|^2 \\
&\phantom{=\;\;}= \sum_{h \in H}\left|A_{\pi(g)f}(h.p)-A_f(h.p)\right|^2\\
&\phantom{=\;\;}= \sum_{h \in H}\left|\left(\pi(g^{-1})A_{f}\right)(h.p)-A_f(h.p)\right|^2,
\end{aligned}
\end{displaymath}
where the second inequality is the triangle inequality. By Lemma \ref{lemma:discrete},
\begin{displaymath}
\begin{aligned}
&\max_{g \in K'}\|\pi(g)f-f\|^2 \\
&\phantom{=\;\;}\geq \max_{g \in K'}\sum_{h \in H}|(\pi(g)A_f)(h.p)-A_f(h.p)|^2 \\
&\phantom{=\;\;}=\max_{g \in K'}\|\pi(g)A_f-A_f\|^2 \\
&\phantom{=\;\;}\geq \eta \|A_f\|^2\\
&\phantom{=\;\;}=\epsilon' \|f\|^2,
\end{aligned}
\end{displaymath}
where $\epsilon'$ is a multiple of the constant $\eta$ in Lemma \ref{lemma:discrete}, as in this case we have $\sharp K' = 2 \sharp K$. If the compact set $K_f$ is not contained in one $H-$orbit, one can take an \textit{$H$-related cover} of $K_f$, then by the orthogonality similar to Fact 2 in Lemma \ref{lemma:discrete} and follow the last few lines in the proof of Lemma \ref{lemma:discrete}, one can also choose the pair $(K', \epsilon'')$, where $\epsilon''$ is a suitable multiple of $\epsilon'$, to complete the proof.
 \end{proof}
\begin{proof}[Proof of Theorem \ref{main:theorem}]
As any pseudo-Anosov subgroup acts freely on $\MF(S)$, by Lemma \ref{lemma:discrete} and Proposition \ref{proposition:properly discontinuous}, the theorem is true for $\cal{R} = \emptyset$. When $\cal{R} = S$ or $\cal{R}$ is of middle type, it is concluded by Lemma \ref{lemma:nondiscrete} and Proposition \ref{proposition:properly discontinuous}.

\end{proof}

\begin{remark}
 The same trick can be used to show that representations of mapping class groups in the space of $L^2-$functions on the Teichm\"{u}ller spaces with respect to Weil-Petersson volumes also have no almost invariant vectors. As one can show that such representations do not have non-trivial invariant vectors, we have the same conclusion about corresponding cohomology groups.
\end{remark}
\section{Classification of quasi-regular representations up to weak containment}\label{section:5}

\subsection{Irreducible decompositions}
\medskip
\noindent
As pointed out in Section \ref{section:des}, for unitary representations of mapping class groups associated to discrete measures on the space of measured foliations, both reducible and irreducible ones exist. By examining Example \ref{example:reducibility} carefully, one sees that, reducible representations have an irreducible decomposition. For any multi-curve $\gamma = \sum_{i = 1}^{k}c_i\gamma_i$ on $S$, where $c_i > 0$ for all $i$, we form $\tilde{\gamma} = \sum_{i = 1}^{k}\gamma_i$. Recall that $\{\gamma_i\}$ is a collection of pairwise disjoint isotopy classes of essential simple closed curves on $S$. As before, denote by \textcolor{black}{$G_{\gamma} = Mod(S,\gamma)$ and $G_{\tilde{\gamma}} = Mod(S,\tilde{\gamma})$} the corresponding subgroups of $\M(S)$. Hence $G_{\gamma}$ is a subgroup of $G_{\tilde{\gamma}}$ of finite index.
\begin{proposition}\label{proposition:irreducible}
Let $S = S_{g,n}$ be a compact surface with $3g + n \geq 4$ and $\gamma, \tilde{\gamma}$ as above.
\begin{description}
  \item[(1)] If the index of $G_{\gamma}$ in $G_{\tilde{\gamma}}$ is  one, then the associated representation in $\ell^2(\M(S)/G_{\gamma})$ of $\M(S)$ is irreducible.
  \item[(2)] If the index of $G_{\gamma}$ in $G_{\tilde{\gamma}}$ is $n > 1$, then the associated representation of $\M(S)$ in $\ell^2(\M(S)/G_{\gamma})$ is reducible.
\end{description}
\end{proposition}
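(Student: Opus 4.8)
The plan is to dispose of part (1) at once and to deduce part (2) by combining induction in stages with a commensurator computation that promotes Paris's irreducibility theorem. Part (1) is immediate: if the index is $1$ then $G_\gamma=G_{\tilde\gamma}$, and $\tilde\gamma=\sum_{i=1}^k\gamma_i$ is by hypothesis a collection of pairwise distinct isotopy classes of essential simple closed curves with vanishing geometric intersection numbers, so Theorem~\ref{cc:irreducible} together with Remark~\ref{remark:multicurves} gives that $\ell^2(\M(S)/G_\gamma)=\ell^2(\M(S)/G_{\tilde\gamma})$ is irreducible. For part (2) I would first write, with $\mathbf 1$ the trivial representation,
$$\ell^2(\M(S)/G_\gamma)=\mathrm{Ind}_{G_\gamma}^{\M(S)}\mathbf 1\;\cong\;\mathrm{Ind}_{G_{\tilde\gamma}}^{\M(S)}\bigl(\mathrm{Ind}_{G_\gamma}^{G_{\tilde\gamma}}\mathbf 1\bigr),$$
reducing the question to decomposing the finite-dimensional $G_{\tilde\gamma}$-representation $\rho:=\mathrm{Ind}_{G_\gamma}^{G_{\tilde\gamma}}\mathbf 1=\ell^2(G_{\tilde\gamma}/G_\gamma)$ and then inducing back up.

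The representation $\rho$ is the $n$-dimensional permutation representation of $G_{\tilde\gamma}$ on the set $G_{\tilde\gamma}/G_\gamma$ --- equivalently on the finite set of re-weightings of $\tilde\gamma$ lying in the $\M(S)$-orbit of $\gamma$ --- and it factors through the finite quotient $\bar F$ of $G_{\tilde\gamma}$ permuting $\{\gamma_1,\dots,\gamma_k\}$. The constants form a trivial subrepresentation, so $\rho=\mathbf 1\oplus\rho'$ with $\dim\rho'=n-1$, and already one copy of $\ell^2(\M(S)/G_{\tilde\gamma})=\mathrm{Ind}_{G_{\tilde\gamma}}^{\M(S)}\mathbf 1$ splits off as a direct summand.

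The crucial input is the identity $\mathrm{Comm}_{\M(S)}(G_\gamma)=\mathrm{Comm}_{\M(S)}(G_{\tilde\gamma})=G_{\tilde\gamma}$: the first equality holds because $G_\gamma$ has finite index in $G_{\tilde\gamma}$ (commensurability is a commensurator invariant), and the second is equivalent to the irreducibility of $\ell^2(\M(S)/G_{\tilde\gamma})$ supplied by Paris. From it one gets that, for every unitary representation $\tau$ of $G_{\tilde\gamma}$, the commutant of $\mathrm{Ind}_{G_{\tilde\gamma}}^{\M(S)}\tau$ is computed inside $G_{\tilde\gamma}$: a bounded $\M(S)$-intertwiner is a kernel on $G_{\tilde\gamma}$-double cosets, and square-summability forces it to vanish on every double coset $G_{\tilde\gamma}\,g\,G_{\tilde\gamma}$ with $g\notin\mathrm{Comm}_{\M(S)}(G_{\tilde\gamma})=G_{\tilde\gamma}$, that is, on all but the trivial one. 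Hence $\mathrm{End}_{\M(S)}\bigl(\mathrm{Ind}_{G_{\tilde\gamma}}^{\M(S)}\tau\bigr)\cong\mathrm{End}_{G_{\tilde\gamma}}(\tau)$; in particular $\mathrm{Ind}_{G_{\tilde\gamma}}^{\M(S)}$ is additive, sends irreducibles to irreducibles, and is injective on isomorphism classes. Applied to $\tau=\rho$, this turns part (2) into a finite-group statement about the permutation representation $\rho$ of $\bar F$.

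The hardest step --- where I expect the real work to be --- is to show that this finite decomposition has exactly $n$ constituents, each carried by $\mathrm{Ind}_{G_{\tilde\gamma}}^{\M(S)}$ to a representation equivalent to $\ell^2(\M(S)/G_{\tilde\gamma})$. The prototype to generalize is the even/odd splitting of Example~\ref{example:reducibility}: there $G_\gamma\trianglelefteq G_{\tilde\gamma}$ with quotient $\mathbb Z_2$, $\rho=\mathbf 1\oplus\mathrm{sgn}$, and the nontrivial summand of $\ell^2(\M(S)/G_\gamma)$ descends, after the appropriate sign twist, onto $\ell^2(\M(S)/G_{\tilde\gamma})$. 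In general this means analysing $\bar F$ together with its action on the re-weightings of $\tilde\gamma$, decomposing $\rho$ accordingly, and checking that each constituent, induced back to $\M(S)$, absorbs into $\ell^2(\M(S)/G_{\tilde\gamma})$; the count $n$ then follows from the dimension identity $\dim\rho=n$, while irreducibility of each summand is free from the commensurator argument above. So the whole difficulty is concentrated in this finite-group bookkeeping and in the untwisting under induction.
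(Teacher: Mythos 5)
Part (1) is fine and matches the paper. For part (2), however, your proposal stops exactly where the content of the statement lies, and the machinery you do set up works against the conclusion you defer. You reduce, via induction in stages, to decomposing the finite-dimensional permutation representation $\rho=\ell^2(G_{\tilde\gamma}/G_\gamma)$, and you deduce from $\mathrm{Comm}_{\M(S)}(G_{\tilde\gamma})=G_{\tilde\gamma}$ that $\mathrm{Ind}_{G_{\tilde\gamma}}^{\M(S)}$ has commutant $\mathrm{End}_{G_{\tilde\gamma}}(\tau)$, hence is additive, carries irreducibles to irreducibles, and is \emph{injective on isomorphism classes}. Grant all of that. Since $G_{\tilde\gamma}$ acts transitively on $G_{\tilde\gamma}/G_\gamma$, the trivial representation occurs in $\rho$ with multiplicity exactly one, so every constituent of $\rho'$ is nontrivial; by your own injectivity statement each such constituent induces to an irreducible that is \emph{not} equivalent to $\ell^2(\M(S)/G_{\tilde\gamma})=\mathrm{Ind}_{G_{\tilde\gamma}}^{\M(S)}\mathbf 1$. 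So the ``absorption'' you postpone to the final paragraph is not finite-group bookkeeping to be filled in later: it is incompatible with your third paragraph, and you must decide which half you believe (the Mackey--Shoda computation, in which only the trivial double coset contributes when the commensurator equals the subgroup, is the standard statement, so the pressure is on the absorption claim). Separately, the count of $n$ summands does not follow from $\dim\rho=n$: it requires every irreducible constituent of $\rho$ to be one-dimensional, which you never verify and which fails, for instance, whenever the finite quotient acts on the cosets as the full symmetric group on three points, where $\rho$ has only two constituents.

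For comparison, the paper's own argument takes a different and more elementary route: reducibility is exhibited directly by pulling back functions along the degree-$n$ $\M(S)$-equivariant cover $X_\gamma=\M(S).\gamma\to Y_{\tilde\gamma}=\M(S).\tilde\gamma$, and the full decomposition into $n$ summands equivalent to $\ell^2(\M(S)/G_{\tilde\gamma})$ is then asserted by analogy with the even/odd splitting of Example \ref{example:reducibility}, without the induction-in-stages formalism. Your analysis in fact isolates precisely the point at which that analogy needs justification, namely whether the complementary summands are copies of $\ell^2(\M(S)/G_{\tilde\gamma})$ or rather inductions of nontrivial characters of $G_{\tilde\gamma}$; but as written your proposal neither completes the proof nor can be completed along the lines you sketch until that contradiction is resolved.
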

\begin{proof}
(1) is obvious, since the representation $\ell^2(\M(S)/G_{\gamma})$ is $\ell^2(\M(S)/G_{\tilde{\gamma}})$ which is irreducible by Remark \ref{remark:multicurves}.\\
Now assume that $[G_{\tilde{\gamma}}:G_{\gamma}] = n > 1$. Let $X_{\gamma} = \M(S).\gamma$ and $Y_{\tilde{\gamma}} = \M(S).\tilde{\gamma}$, then $X_{\gamma}$ is a $\M(S)-$equivariant covering space of $Y_{\tilde{\gamma}}$ of degree $n$. So every $\ell^2-$function on $Y_{\tilde{\gamma}}$ defines an $\ell^2-$function on $X_{\gamma}$, and such correspondence produces a proper closed $\M(S)-$invariant subspace of $\ell^2(X_{\gamma})$, which implies the reducibility.
\end{proof}
\subsection{Classification up to weak containment}
\medskip
\noindent
We first fix some notations. Fix a hyperbolic structure on $S$. Denote by $\gamma = \sum_{i = 1}^{k}\gamma_i$ and $\delta = \sum_{i = 1}^{k}\delta_i$, that is, multi-curves on $S$ with coefficients all of $1$s. Such multi-curves will be called {\it $1-$multi-curves}. For any $1-$multi-curve $\gamma  = \sum_{i = 1}^{k}\gamma_i$ on $S$, we will call the union of geodesic representatives of $\gamma$ a {\it geometric multi-curve} and, for any $i$, the representative $\alpha_i$ a {\it geometric component}. Denote by $G_{\gamma}(G_{\delta})$ the corresponding subgroup of $\M(S)$, and by $\pi_{\gamma}(\pi_{\delta})$ the associated unitary representation on $\ell^2(\M(S)/G_{\gamma})(\ell^2(\M(S)/G_{\delta}))$. Let $\lambda_{S}$ be the regular representation of the mapping class group $\M(S)$ of $S$ on $\ell^2(\M(S))$. We first recall some definitions which can be found in \cite{Paris2002},\cite{BekkaHarpValette08}, \cite{BekkaKalantar}.\\

\noindent Let $G$ be a countable discrete group and H be a subgroup of $G$, the {\it commensurator} of H is defined to be $$Com_G(H) = \left\{g\in G: gHg^{-1} \bigcap H ~~\text{has finite index in }~~ H ~~\text{and}~~ gHg^{-1}. \right\}$$
A discrete group is said to be {\it C*-simple} if every unitary representation, which is weakly contained in the regular representation of $G$, is weakly equivalent to the regular representation. Let $\gamma$ and $\delta$ be geometric multi-curves, then $\gamma$ and $\delta$ are {\it of the same type} if there is an element $f$ in $\M(S)$ such that $f(\gamma) = \delta$. We say a subgroup $H$ of $G$ has the {\it spectral gap property} if the unitary representation $\ell^{2}(X)$ associated to the action $H \curvearrowright X = G/H - \{H\}$ does not have almost invariant vectors. In this section, we give a classification for unitary representations of $\M(S)$ associated to discrete measures.
\begin{lemma}\label{lemma:spectral}
Given a $1-$multi-curve $\gamma$ on $S$ and let $m$ be the number of its geometric components.
\begin{enumerate}
  \item If $m = 3g-3+n$, then $G_{\gamma}$ is amenable.
  \item If $1 \leq m < 3g-3+n$, then $G_{\gamma}$ has the spectral gap property.
\end{enumerate}
\end{lemma}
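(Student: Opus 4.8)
The plan is to treat the two cases by counting moduli and invoking well-known structure theorems for stabilizers of multi-curves in $\M(S)$. For part (1), suppose $m = 3g-3+n$, so $\gamma$ is a pants decomposition of $S$. The stabilizer $G_\gamma$ of the isotopy class of such a maximal $1$-multi-curve fits into a short exact sequence $1 \to \mathbb{Z}^m \to G_\gamma \to Q \to 1$, where the $\mathbb{Z}^m$ is generated by the Dehn twists about the $\gamma_i$ and $Q$ is a subgroup of the finite group permuting the curves composed with the (finite) mapping class groups of the complementary pairs of pants $S_{0,3}$ (whose mapping class groups are finite). Hence $Q$ is virtually trivial, in fact finite, so $G_\gamma$ is virtually $\mathbb{Z}^m$, and finitely generated virtually abelian groups are amenable. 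This gives (1).

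For part (2), assume $1 \le m < 3g-3+n$. I would follow exactly the same strategy used for Theorem \ref{main:theorem}: exhibit a rank $2$ free pseudo-Anosov subgroup $H \le G_\gamma$ acting with trivial stabilizers on $X = G/G_\gamma - \{G_\gamma\}$ and then apply Lemma \ref{lemma:discrete}. Since $m < 3g-3+n$, cutting $S$ along (geodesic representatives of) $\gamma$ yields a complementary subsurface $\Sigma = S - \gamma$ at least one component of which is not a pair of pants, hence supports its own pseudo-Anosov mapping classes; choosing two such that generate a rank $2$ free pseudo-Anosov subgroup $H$ of $\M(\Sigma) \le G_\gamma$ (possible by the same Fathi--type construction cited in Lemma \ref{lemma:p-rank}). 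One then checks that $H$ acts on $X = G/G_\gamma - \{G_\gamma\}$ with trivial point stabilizers: a nontrivial $h \in H$ fixing a coset $gG_\gamma$ would conjugate into $G_\gamma$, forcing $h$ to permute the curves $g(\gamma_i)$; but an infinite-order element of a pure pseudo-Anosov subgroup cannot fix a multi-curve, a contradiction. By Lemma \ref{lemma:discrete}, $\pi = \ell^2(X)$ has no almost invariant vectors, which is precisely the spectral gap property for $G_\gamma$.

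The main obstacle I anticipate is the bookkeeping in verifying that the chosen $H$ genuinely acts with trivial stabilizers on the coset space minus the trivial coset — one must rule out not merely that elements of $H$ fix $\gamma$ itself (clear since $H$ is supported in the complement) but that no $h \in H \setminus \{1\}$ fixes any other coset $gG_\gamma$, i.e. that $g^{-1}hg$ never lies in $G_\gamma$ for $g \notin G_\gamma$; this is where one uses that $H$ is pure pseudo-Anosov in $\M(\Sigma)$ together with the fact that any element of $\M(S)$ conjugating a proper essential multi-curve to another must itself have reducible (non-pseudo-Anosov) type, so it cannot be an infinite-order element of $H$. A secondary technical point in case (1) is confirming the complementary mapping class groups contribute only finitely, i.e. that the Birman exact sequence for a pants decomposition has finite quotient; this is standard but should be stated. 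Once these are in place, Lemma \ref{lemma:discrete} and the amenability of virtually abelian groups close both cases.
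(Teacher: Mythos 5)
Your part (1) is correct and is essentially the paper's argument ($G_{\gamma}$ is virtually the free abelian group of twists about a pants decomposition, hence amenable), just with the standard exact sequence spelled out. Part (2) follows the paper's overall strategy --- build a rank $2$ free subgroup $H \leq G_{\gamma}$ acting with trivial stabilizers on $X_{\gamma} - \{\gamma\} \cong \M(S)/G_{\gamma} - \{G_{\gamma}\}$ and invoke Lemma \ref{lemma:discrete} --- but your construction of $H$ has a genuine gap. You take $\varphi,\psi$ pseudo-Anosov on a \emph{single} non-pants component $\Sigma$ of $S$ cut along $\gamma$, extended by the identity elsewhere. Such an $H$ need not act with trivial stabilizers on $X_{\gamma} - \{\gamma\}$: if the complement of $\gamma$ has another component of positive complexity, there are multicurves $\delta \in \M(S).\gamma$ with $\delta \neq \gamma$ that are disjoint from $\Sigma$ (for example, $S$ closed of genus $3$, $\gamma$ a separating curve cutting off a genus $1$ piece $\Sigma$, and $\delta$ a curve of the same topological type contained in the genus $2$ piece); every element of $H$ fixes such a $\delta$, so the stabilizer is all of $H$ and Lemma \ref{lemma:discrete} does not apply. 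Moreover, your justification of triviality misuses purity: the elements of $H$ are \emph{not} pseudo-Anosov in $\M(S)$ --- they are reducible, since they fix $\gamma$ itself --- so one cannot argue that an element of $H$ ``cannot fix a multicurve'' or ``cannot have reducible type''; that is exactly what they do.

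The paper avoids this by taking the pseudo-Anosov pairs on \emph{all} components $T_1,\dots,T_k$ of positive complexity simultaneously, setting $\varphi=\prod_i\varphi_i$, $\psi=\prod_i\psi_i$, so that every nontrivial $h\in H$ restricts to a (nontrivial, hence pseudo-Anosov) element of $H_i$ on each $T_i$. Then one argues directly about a putative fixed $\delta\in X_{\gamma}-\{\gamma\}$: since $\delta\neq\gamma$ is a $1$-multicurve with the same number of components, it cannot be carried entirely by $\gamma$ together with the pants components, so it either crosses $\gamma$ (cut along $\gamma$ and analyze the resulting arcs fixed, up to a power, by $h$) or has a component essential in some $T_i$; in either case one produces an element of some $H_i$ fixing an essential simple closed curve or arc system in $T_i$, contradicting that $H_i$ is pseudo-Anosov on $T_i$. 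To repair your proof you would need both this ``all components at once'' construction and an argument of this kind in place of the appeal to purity in $\M(S)$.
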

\begin{proof}
If $m = 3g-3+n$, then $G_{\gamma}$  is virtually abelian, thus it is amenable. For other cases, as $m < 3g-3+n$, one can cut $S$ along geometric components so that the resulting surface has at least one connected component that admits two pseudo-Anosov mapping classes generating a rank 2 pseudo-Anosov subgroup. Assume components admitting pseudo-Anosov mapping classes are labelled as $T_1,...,T_k$, two pseudo-Anosov mapping classes in each $\M(T_i)$ and the associated rank 2 pseudo-Anosov subgroup are also denoted by $\varphi_i,\psi_i, H_i$, respectively. Note that pseudo-Anosov homeomorphisms fix boundaries. Then define two maps $\varphi$ and $\psi$ on $S$ (thus their isotopy classes) by extending $\varphi = \prod_i\varphi_i$ and $\psi = \prod_i\psi_i$. Hence the subgroup $H$ generated by $\varphi$ and $\psi$ is a rank 2 free group. Moreover the action of $H$ on the set $X_{\gamma} - \{\gamma\}$ has trivial stabilizers. Otherwise, if an element $\phi$ in $H$ fix $\delta \in X_{\gamma} - \{\gamma\}$, then by the construction of $H$, the geometric intersection number of $\delta$ and $\gamma$ is nonzero and thus it intersects one of $T_i$. We cut $S$ along $\gamma$ so that $\delta$ becomes a family of isotopy classes of arcs. Since $\phi$ fixes $\delta$, up to some powers of $\phi$, it fixes each resulting isotopy class of arcs. But then it can be shown that, for some $i$, there is an element in $H_i$ that fixes the isotopy class of an essential simple closed curve, which contradicts the assumption that $H_i$ is a pseudo-Anosov subgroup. By Lemma \ref{lemma:discrete}, we can conclude that $G_{\gamma}$ has the spectral gap property.
\end{proof}

\begin{lemma}[Theorem A in \cite{BekkaKalantar}]\label{lemma:bekkaKa}
Let $G$ be a countable discrete group and $H$ be a subgroup of $G$ that has the spectral gap property. Let $L$ be a subgroup of $G$ satisfying $Com_G(L) = L$, then two unitary representations $\ell^2(G/H)$ and $\ell^2(G/L)$ of $G$ are weakly equivalent if and only if $L$ is conjugate to $H$.
\end{lemma}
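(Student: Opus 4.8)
The plan is to treat the two implications separately, with essentially all of the work falling in the ``only if'' direction.

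For the ``if'' direction, suppose $L=gHg^{-1}$ for some $g\in G$. Then the assignment $xH\mapsto xg^{-1}L$ is a well-defined $G$-equivariant bijection $G/H\to G/L$: well-definedness uses $gHg^{-1}=L$, and equivariance is immediate. It therefore induces a unitary isomorphism of $G$-representations $\ell^2(G/H)\cong\ell^2(G/L)$, and unitarily equivalent representations are in particular weakly equivalent. This settles the ``if'' direction.

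For the converse, write $\pi_H=\ell^2(G/H)$ and $\pi_L=\ell^2(G/L)$, and assume $\pi_H\prec\pi_L$ and $\pi_L\prec\pi_H$. Weak containment is preserved under restriction to $H$, and $\delta_{eH}$ is an $H$-fixed unit vector of $\pi_H$ whose orthogonal complement is exactly the representation $\sigma:=\ell^2(G/H\setminus\{eH\})$ occurring in the spectral gap property. Restricting both weak containments to $H$ thus yields
$$\mathrm{Res}_H\,\pi_L\ \prec\ \mathrm{Res}_H\,\pi_H=1_H\oplus\sigma,\qquad 1_H\ \prec\ \mathrm{Res}_H\,\pi_L.$$
Decomposing over $H$-orbits gives $\mathrm{Res}_H\,\pi_L=\bigoplus_{HgL}\ell^2\!\big(H/(H\cap gLg^{-1})\big)$, and a nonzero $H$-invariant vector in this space is exactly a double coset with $[H:H\cap gLg^{-1}]<\infty$. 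So the aim of this step is to upgrade the weak containment $1_H\prec\mathrm{Res}_H\,\pi_L$ to a genuine $H$-invariant vector.

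Here the spectral gap property of $H$ enters: $\sigma$ has no almost invariant vectors, so there is a finite $F\subseteq H$ (enlarged to contain a generating set of $H$, which only strengthens the gap) and $\kappa>0$ with $\|\sigma(\mu)\|\le 1-\kappa$ for the self-adjoint convolver $\mu=\tfrac{1}{2|F|}\sum_{f\in F}(f+f^{-1})\in\mathbb{C}[H]$; since $1_H$ acts as the identity, $\mathrm{spec}\big((1_H\oplus\sigma)(\mu)\big)\subseteq\{1\}\cup[-1,1-\kappa]$. Weak containment $\mathrm{Res}_H\,\pi_L\prec 1_H\oplus\sigma$ gives $\|(\mathrm{Res}_H\,\pi_L)(x)\|\le\|(1_H\oplus\sigma)(x)\|$ for every $x\in\mathbb{C}[H]$; applying this to $x=p(\mu)$ for real polynomials $p$ yields the spectrum inclusion $\mathrm{spec}\big((\mathrm{Res}_H\,\pi_L)(\mu)\big)\subseteq\{1\}\cup[-1,1-\kappa]$. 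On the other hand $1_H\prec\mathrm{Res}_H\,\pi_L$ forces $1$ into this spectrum, so $1$ is an isolated spectral point, hence an eigenvalue; any unit eigenvector is fixed by each $f\in F$ by the equality case of the triangle inequality for $\mu$, hence by all of $H$. This produces the desired $H$-invariant vector, and we conclude that there exists $a\in G$ with $[H:H\cap aLa^{-1}]<\infty$.

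Finally, set $L'=aLa^{-1}$ (so $\mathrm{Com}_G(L')=L'$ and $\pi_{L'}\cong\pi_L$) and $K=H\cap L'$, giving $[H:K]<\infty$; it remains to promote this one-sided finite-index almost-containment to the equality $H=L'$, equivalently $L=a^{-1}Ha$. The tool I would use is the classical commensurator criterion that $\mathrm{Com}_G(L')=L'$ makes $\pi_{L'}$ irreducible (see \cite{Paris2002}): irreducibility pins the support of $\pi_H\prec\pi_{L'}$ to the single point $[\pi_{L'}]$, and I would combine this with the reverse containment $\pi_{L'}\prec\pi_H$ to rule out $K$ being a proper subgroup, the finite $H$-orbit of $eL'$ giving a finite-dimensional $H$-subrepresentation of the irreducible $\pi_{L'}$ that weak equivalence cannot match unless $K=H=L'$. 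I expect this promotion---from ``$H$ lies with finite index inside a conjugate of $L$'' to ``$H$ equals a conjugate of $L$''---to be the main obstacle: the spectral gap of $H$ controls only the $H$-side of the comparison and yields just the finite-index statement, so it is precisely the self-commensurating rigidity $\mathrm{Com}_G(L)=L$ that must eliminate the residual index, and making that step rigorous (in particular ruling out $L'$ strictly larger than $H$) is the delicate core of the argument.
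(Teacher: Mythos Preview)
The paper does not prove this lemma at all: it is stated with the attribution ``Theorem A in \cite{BekkaKalantar}'' and used as a black box in the proof of Theorem~\ref{theorem:classification}. There is no proof in the paper to compare your attempt against.

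That said, a brief comment on your attempt. The ``if'' direction and the spectral-gap step producing a genuine $H$-invariant vector in $\mathrm{Res}_H\pi_L$ (hence some $a\in G$ with $[H:H\cap aLa^{-1}]<\infty$) are correct and are indeed the standard opening moves. Your own diagnosis of the final step is accurate: the promotion from finite index to equality is where the real content lies, and your sketch there does not yet constitute a proof. The sentence about ``a finite-dimensional $H$-subrepresentation of the irreducible $\pi_{L'}$'' conflates $H$-invariance with $G$-invariance and does not give a contradiction as written. One ingredient you are missing is that the spectral gap hypothesis on $H$ already forces $\mathrm{Com}_G(H)=H$ (a finite $H$-orbit in $G/H\setminus\{eH\}$ would produce an $H$-invariant vector there), so both subgroups are self-commensurating; the Bekka--Kalantar argument then exploits this two-sided rigidity together with both weak containments in a more careful way than your outline indicates. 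If you want to complete the argument, you should consult \cite{BekkaKalantar} directly rather than try to reconstruct it.
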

\begin{theorem}\label{theorem:classification}
Let $S = S_{g,n}$ be a compact surface with $3g + n \geq 4$. Let $\gamma$ and $\delta$ be two $1-$multi-curves on  $S$ with $k,l$ geometric components, respectively.
\begin{description}
  \item[(1)] If at least one of $k, l$ is not $3g-3+n$, then the associated unitary representations $\pi_{\gamma}$ and $\pi_{\delta}$ are weakly equivalent if and only if $\gamma$ and $\delta$ are of the same type.
  \item[(2)] Suppose $S$ is not $S_{0,4}, S_{1,1}, S_{1,2}, S_{2,0}$. If the number of geometric components of $\gamma$ is $3g-3+n$, then $\pi_{\gamma}$ is weakly equivalent to the regular representation $\lambda_S$.
  \item[(3)] Suppose $S$ is not $S_{0,4}, S_{1,1}, S_{1,2}, S_{2,0}$. If the number of geometric components of $\gamma$ is not $3g-3+n$, then $\pi_{\gamma}$ is not weakly contained in $\lambda_S$.
\end{description}
\end{theorem}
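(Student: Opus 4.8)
The plan is to derive the three assertions from ingredients already available together with one external fact. The available ingredients are: the spectral gap versus amenability dichotomy for stabilizers of $1$-multi-curves (Lemma \ref{lemma:spectral}); the Bekka--Kalantar rigidity statement (Lemma \ref{lemma:bekkaKa}); and two soft properties of quasi-regular representations, namely that $\ell^2(G/H)$ always carries a canonical $H$-fixed vector (the indicator of the trivial coset), and that when $H$ is amenable, $\ell^2(G/H)=\mathrm{Ind}_H^G 1_H$ is weakly contained in $\lambda_G=\mathrm{Ind}_H^G\lambda_H$ (combine continuity of induction with Theorem \ref{Eym}). The external fact is that $\M(S)$ is $C^*$-simple whenever $S$ is none of $S_{0,4},S_{1,1},S_{1,2},S_{2,0}$: by \cite{BridsonHarpe},\cite{BKKO},\cite{BekkaKalantar} this holds because for such $S$ the group $\M(S)$ is acylindrically hyperbolic with trivial finite radical, whereas each of the four exceptional surfaces has $\M(S)$ with a non-trivial finite normal subgroup or a non-trivial centre; this is the sole reason those surfaces are excluded in $(2)$ and $(3)$.

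I first record two elementary facts about a $1$-multi-curve $\alpha$: one has $G_{f(\alpha)}=fG_\alpha f^{-1}$ for $f\in\M(S)$, and $G_\alpha=G_\beta$ forces $\alpha=\beta$ (recover $\alpha$ from $G_\alpha$ as its canonical reduction system); moreover $Com_{\M(S)}(G_\alpha)=G_\alpha$. The last point follows either from the canonical reduction system again (a mapping class commensurating $G_\alpha$ must preserve $\alpha$, since it preserves, up to finite powers, the Dehn twists along the components of $\alpha$), or from the classical equivalence ``$\ell^2(G/H)$ irreducible $\iff H=Com_G(H)$'' together with Theorem \ref{cc:irreducible} and Remark \ref{remark:multicurves}. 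Consequently $G_\gamma$ and $G_\delta$ are conjugate in $\M(S)$ exactly when $\gamma$ and $\delta$ are of the same type. For $(1)$, since the conclusion is symmetric in $\gamma$ and $\delta$ we may assume $k\ne 3g-3+n$, i.e.\ $1\le k<3g-3+n$, so $G_\gamma$ has the spectral gap property by Lemma \ref{lemma:spectral}(2). As $Com_{\M(S)}(G_\delta)=G_\delta$, Lemma \ref{lemma:bekkaKa} applied with $H=G_\gamma$ and $L=G_\delta$ gives that $\pi_\gamma$ is weakly equivalent to $\pi_\delta$ if and only if $G_\delta$ is conjugate to $G_\gamma$, i.e.\ if and only if $\gamma$ and $\delta$ have the same topological type; the reverse implication is the trivial observation that conjugate subgroups yield unitarily equivalent quasi-regular representations.

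For $(2)$, $k=3g-3+n$ means $\gamma$ is a pants decomposition, so $G_\gamma$ is amenable by Lemma \ref{lemma:spectral}(1); hence $\pi_\gamma=\mathrm{Ind}_{G_\gamma}^{\M(S)}1_{G_\gamma}\prec\mathrm{Ind}_{G_\gamma}^{\M(S)}\lambda_{G_\gamma}=\lambda_S$, and $C^*$-simplicity of $\M(S)$ promotes this weak containment to weak equivalence, so $\pi_\gamma$ is weakly equivalent to $\lambda_S$. For $(3)$, the hypotheses force $1\le k<3g-3+n$, and the rank $2$ free subgroup produced inside $G_\gamma$ in the proof of Lemma \ref{lemma:spectral}(2) shows $G_\gamma$ is non-amenable. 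If $\pi_\gamma$ were weakly contained in $\lambda_S$, then restricting to $G_\gamma$ and using the $G_\gamma$-fixed vector $\delta_{[G_\gamma]}\in\ell^2(\M(S)/G_\gamma)$ we would obtain $1_{G_\gamma}\prec\mathrm{Res}_{G_\gamma}\pi_\gamma\prec\mathrm{Res}_{G_\gamma}\lambda_S\sim\lambda_{G_\gamma}$, whence $G_\gamma$ would be amenable by Theorem \ref{Eym}, a contradiction; therefore $\pi_\gamma$ is not weakly contained in $\lambda_S$.

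The steps that will require care rather than genuine difficulty are: the Mackey-type equivalence ``$\ell^2(G/H)$ irreducible $\iff H=Com_G(H)$'' (needed to feed Paris's theorem into Lemma \ref{lemma:bekkaKa}); the continuity of induction for weak containment and the identity $\mathrm{Ind}_H^G\lambda_H\cong\lambda_G$ (standard, see \cite{BekkaHarpValette08}); and verifying $G_\alpha=G_\beta\Rightarrow\alpha=\beta$ for $1$-multi-curves. The real obstacle is conceptual and external: part $(2)$ rests entirely on $C^*$-simplicity of $\M(S)$, so one must be certain that the four excluded surfaces are exactly those for which this fails. I would also double-check that Lemma \ref{lemma:bekkaKa} is used in the correct asymmetric form, namely that it suffices for one of $G_\gamma,G_\delta$ to have the spectral gap property while the other is only assumed to be self-commensurating.
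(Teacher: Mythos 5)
Your proof is correct, and for parts (1) and (2) it follows essentially the paper's route: part (1) is exactly the combination of Lemma \ref{lemma:spectral}, the self-commensurating property $Com_{\M(S)}(G_\delta)=G_\delta$ from \cite{Paris2002}, and Lemma \ref{lemma:bekkaKa} applied asymmetrically with $H=G_\gamma$, $L=G_\delta$ (the paper uses it in the same asymmetric way, and likewise treats ``$G_\gamma$ conjugate to $G_\delta$ iff same type'' as known); part (2) differs only cosmetically, in that you re-derive the containment $\pi_\gamma=\mathrm{Ind}_{G_\gamma}^{\M(S)}1\prec\lambda_S$ via amenability and continuity of induction and then invoke $C^*$-simplicity of $\M(S)$ from \cite{BridsonHarpe}, whereas the paper directly quotes the characterization of $C^*$-simplicity from \cite{BKKO} (amenable subgroups have quasi-regular representations weakly equivalent to the regular one). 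Where you genuinely diverge is part (3): the paper deduces it from part (2) together with the definition of $C^*$-simplicity (if $\pi_\gamma\prec\lambda_S$ then $\pi_\gamma\sim\lambda_S\sim\pi_\delta$ for a pants decomposition $\delta$, contradicting part (1)), while you argue directly that $\mathrm{Res}_{G_\gamma}\pi_\gamma$ contains the fixed vector $\delta_{[G_\gamma]}$, so $\pi_\gamma\prec\lambda_S$ would force $1_{G_\gamma}\prec\lambda_{G_\gamma}$ and hence amenability of $G_\gamma$, contradicting the free subgroup of rank $2$ inside $G_\gamma$ produced in the proof of Lemma \ref{lemma:spectral}. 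Your argument is more elementary and, as you might note explicitly, does not use $C^*$-simplicity at all, so it proves (3) without excluding $S_{0,4},S_{1,1},S_{1,2},S_{2,0}$ -- a slight strengthening of the statement; the only cost is that you must check (as you do) that the free group of Lemma \ref{lemma:spectral} really lies in $G_\gamma$, which holds since its generators are supported in the complement of $\gamma$.
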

\begin{proof}
For any $1-$multi-curve $\gamma$ on $S$, $Com_{\M(S)}(G_{\gamma}) = G_{\gamma}$ (see \cite{Paris2002}). Given two $1-$multi-curves $\gamma$ and $\delta$ with $k,l$ geometric components, respectively, such that at least one of $k$ and $l$ is not $3g-3+n$, then by Lemma \ref{lemma:spectral}, Lemma \ref{lemma:bekkaKa} and the fact that $G_{\gamma}$ is conjugate to $G_{\delta}$ if and only if $\gamma$ and $\delta$ are of the same type, we complete the proof for (1). For (2), by \cite{BridsonHarpe}, if $S$ is not $S_{0,4}, S_{1,1}, S_{1,2}, S_{2,0}$, the mapping class group $\M(S)$ is C*-simple. By the result of \cite{BKKO} which states that a discrete group is C*-simple if and only if, for any amenable subgroup $M$ of $G$, the quasi-regular representation $\ell^2(G/M)$ is weakly equivalent to the regular one. So combine with Lemma \ref{lemma:spectral}, we complete the proof of (2). The statement (3) is deduced from (2) and the definition of C*-simplicity.
\end{proof}
\begin{remark}
The ``only if" part of (1) is a stronger version of Corollary 5.5 in \cite{Paris2002}.
\end{remark}
\begin{remark}
If $S$ is one of $S_{0,4}, S_{1,1}, S_{1,2}, S_{2,0}$, it is easy to show that, if the number of components of $\gamma$ is $3g-3+n$, then $\pi_{\gamma}$ is weakly contained in the regular representation $\lambda_S$. However, for other types of $\gamma$, we don't know if $\pi_{\gamma}$ is weakly contained in $\lambda_S$. And we don't know what can be said about unitary representations corresponding to non-discrete measures on the space of measured foliations.
\end{remark}

\bibliography{unitary}

\bigskip
\noindent
Laboratoire J.A. Dieudonn\'e, CNRS, Universit\'e C\^ote d'Azur, 06108 NICE, France \\
\noindent \textbf{Email address: Biao.MA@univ-cotedazur.fr}
\end{document}